\newcommand{\cone}{\operatorname{cone}}
\newcommand{\const}{\operatorname{const.}}
\newcommand{\diam}{\operatorname{diam}}
\newcommand{\dvol}{\operatorname{dvol}}
\newcommand{\inj}{\operatorname{inj}}
\newcommand{\R}{{\mathbb R}}
\newcommand{\Ric}{\operatorname{Ric}}
\newcommand{\Riem}{\operatorname{Riem}}
\newcommand{\Rm}{\operatorname{Rm}}
\newcommand{\SO}{\operatorname{SO}}
\newcommand{\vol}{\operatorname{vol}}
\newcommand{\Z}{{\mathbb Z}}
\numberwithin{equation}{section}
\theoremstyle{plain}
\newtheorem{lemma}[equation]{Lemma}
\newtheorem{theorem}[equation]{Theorem}
\newtheorem{proposition}[equation]{Proposition}
\newtheorem{corollary}[equation]{Corollary}
\newtheorem{conjecture}[equation]{Conjecture}
\theoremstyle{remark}
\newtheorem{remark}[equation]{Remark}
\begin{document}

\title{On $3$-manifolds with pointwise pinched nonnegative Ricci curvature}

\author{John Lott}
\address{Department of Mathematics\\
University of California, Berkeley\\
Berkeley, CA  94720-3840\\
USA} \email{lott@berkeley.edu}

\thanks{Research partially supported by NSF grant
DMS-1810700}
\date{February 16, 2023}

\begin{abstract}
  There is a conjecture that a complete Riemannian $3$-manifold with bounded
  sectional curvature, and pointwise pinched nonnegative Ricci curvature, must be flat or compact.  We show that this is true when the negative part (if any) of the sectional curvature decays quadratically.
\end{abstract}

\maketitle

% \tableofcontents

\section{Introduction} \label{sect1}

Let $(M, g)$ be a complete connected Riemannian $3$-manifold.
Suppose that $\Ric(M,g) \ge 0$. At a point $m \in M$, the
Ricci tensor on $T_mM$ can be diagonalized relative to $g(m)$.
Let $r_1 \le r_2 \le r_3$ be its eigenvalues. Given $c \in (0,1]$,
we say that $(M,g)$ is {\em $c$-Ricci pinched} if at all $m \in M$,
we have $r_1 \ge c r_3$. 

\begin{conjecture} \label{1.1}
  Let $(M, g)$ be a complete connected Riemannian manifold of dimension
  three,
  with bounded sectional curvature and nonnegative Ricci curvature.
  Suppose that $(M, g)$ is $c$-Ricci pinched for some $c \in (0,1]$.
  Then $(M,g)$ is flat or $M$ is compact.
\end{conjecture}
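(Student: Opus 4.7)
My plan is to argue by contradiction: assume $(M,g)$ is noncompact and not flat, and derive a contradiction. In dimension three the sectional curvatures are affine combinations of the Ricci eigenvalues, $K_{ij} = \frac{1}{2}(r_i + r_j - r_k)$ for $\{i,j,k\} = \{1,2,3\}$, so the $c$-pinching $r_1 \ge c r_3$ together with $\Ric \ge 0$ gives two-sided sectional curvature control in terms of the scalar curvature, and $\Ric \equiv 0$ is equivalent to flatness. Crucially, if $M$ ever splits isometrically as $\R \times \Sigma$, then one Ricci eigenvalue vanishes and pinching forces all three to vanish, hence flatness. The core of the plan therefore reduces to producing an isometric line in $(M,g)$ or in a suitable limit of $(M,g)$.

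To produce such a line I would exploit the noncompactness together with $\Ric \ge 0$: rays to infinity exist from every basepoint, and the aim is to combine two oppositely-directed ones. Quantitatively, Bishop--Gromov volume comparison controls the asymptotic geometry; I would then take pointed blow-down limits at a sequence of basepoints $m_i \to \infty$, rescaled so that either the scalar curvature, or a length scale like injectivity radius, normalizes. The bounded sectional curvature hypothesis gives Cheeger--Gromov smooth compactness, producing a limit $(M_\infty, g_\infty, m_\infty)$ with $\Ric \ge 0$ and the same $c$-pinching in which, by the rescaling choice, a line is visible; the Cheeger--Gromoll splitting theorem then gives flatness of the limit. Propagating this flatness back to $(M,g)$ is natural through Hamilton's Ricci flow: Shi's theorem gives short-time existence, and Hamilton's tensor maximum principle applied to the eigenvalue ODE in dimension three should preserve $\Ric \ge 0$ and the pinching $r_1 \ge c r_3$ along the flow, so either long-time convergence homogenizes the curvature to zero, or a singularity model emerges which is ancient, pinched, and nonnegatively Ricci-curved in 3D, hence itself splits.

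The main obstacle is the limit step. Although $r_1 \ge c r_3$ is scale-invariant, a blow-down limit formed without uniform curvature control may have a degenerate Ricci tensor on which the pointwise pinching becomes vacuous, and a Cheeger--Colding asymptotic cone is not automatically a smooth Riemannian manifold on which the splitting theorem applies in the strong form used above. One must in particular rule out the possibility that the negative part of the sectional curvature dominates on long geodesics and prevents line formation; without some quantitative decay of that negative part, guaranteeing both smoothness of the limit and survival of the pinching under passage to the limit is the essential difficulty, and this is precisely where the quadratic-decay hypothesis of the paper's theorem enters. Bridging this gap in full generality, rather than only under such a decay assumption, appears to be the crux that keeps the conjecture open.
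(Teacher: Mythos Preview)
The statement is Conjecture~1.1, which the paper explicitly leaves open; only the partial results of Theorems~1.3 and~1.4 are proved, under additional curvature-decay hypotheses. You correctly acknowledge in your final paragraph that your outline does not close the gap in general, so there is no complete proof to compare against.

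That said, your sketched strategy differs substantively from the paper's route to its partial results. You propose spatial blowdowns at basepoints drifting to infinity, hoping to find a line and invoke Cheeger--Gromoll splitting. The paper instead runs Ricci flow from $g_0$ and proves, via a Hamilton-type pinching estimate
\[
R^{\sigma-2}\left|\Ric - \tfrac{1}{3}Rg\right|^2 \le \left(\tfrac{3}{2t}\right)^\sigma,
\]
that the flow exists for all time with $\|\Rm\|_\infty \le C/t$ (Propositions~2.1 and~2.13). It then takes \emph{parabolic} blowdowns $g_s(u) = s^{-1}g(su)$ as $s\to\infty$. The main technical step (Proposition~3.1) is to rule out collapsing of these blowdowns, yielding a three-dimensional limit and hence cubic volume growth of $(M,g_0)$. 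The additional hypotheses of Theorems~1.3 and~1.4 are then used to identify the tangent cone at infinity as flat $\R^3$, contradicting nonflatness via Colding's volume-convergence theorem.

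Two concrete places where your outline would stall even for the partial results. First, the bounded-sectional-curvature hypothesis does not by itself give Cheeger--Gromov compactness for blowdowns: rescaling by $\alpha_i^{-2}$ sends the curvature bound to infinity and the injectivity radius may collapse; the paper's noncollapsing argument in Section~3 is precisely what is needed and is the heart of the paper. Second, your splitting-theorem endgame requires the limit to be a smooth Riemannian manifold containing a line, but tangent cones at infinity under $\Ric\ge 0$ are in general only metric cones on which the pointwise pinching has no direct meaning. The paper bypasses this either by using the Ricci flow limit (which \emph{is} smooth) together with pseudolocality or a soliton classification (Section~5), or by invoking weak convergence of curvature operators in the sense of Lebedeva--Petrunin to make the pinching survive on the Alexandrov cone (Section~6).
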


Using basic properties of Ricci flow, one can show that Conjecture \ref{1.1} is
equivalent to the following conjecture.

\begin{conjecture} \label{1.2}
  Let $(M, g)$ be a complete connected Riemannian manifold of dimension
  three,
  with bounded sectional curvature and positive Ricci curvature.
  Suppose that $(M, g)$ is $c$-Ricci pinched for some $c \in (0,1]$.
  Then $M$ is compact.
\end{conjecture}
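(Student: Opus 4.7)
My plan is a Ricci flow approach, which is natural in dimension three because the curvature operator is completely determined by the Ricci tensor and because pointwise pinching conditions couple well with Hamilton's maximum principle. Since $(M,g)$ has bounded sectional curvature, Shi's theorem provides a smooth solution $g(t)$, $t\in [0,T)$, of the Ricci flow starting at $g$, with bounded curvature on each time slice. The first task is to show that the strict positivity and the pointwise pinching $r_1 \ge c\, r_3$ of the Ricci eigenvalues are preserved, and ideally improved, along the flow. In dimension three this reduces to a maximum principle for a finite-dimensional ODE on the Ricci eigenvalues, of the Hamilton type, for which pointwise pinching is a well-studied invariant condition.

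Next I would try to establish long-time existence, $T=\infty$. A finite-time singularity would force $\sup_M |\Rm| \to \infty$ as $t\nearrow T$; rescaling at a sequence of almost-maximal space-time points and appealing to Hamilton's compactness theorem would produce a complete, nonflat, pointwise pinched ancient Ricci flow with $\Ric > 0$ and bounded curvature. In dimension three, $\kappa$-noncollapsed ancient solutions of bounded nonnegative curvature are classified, and the improved pinching should restrict the blow-up limit to a shrinking round quotient $S^3/\Gamma$. This would either contradict noncompactness of the region on which curvature concentrates, or already force $M$ to be compact.

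Assuming $T=\infty$, I would then perform a type III rescaling $g_i(t) = t_i^{-1} g(t_i t)$ at times $t_i \to \infty$, around basepoints chosen so that some curvature remains visible, and pass to a limit using Hamilton's compactness. With the preserved pointwise pinching and the ellipticity improvements coming from the $c$-pinched hypothesis, the limit should be a nonflat, pointwise pinched, noncompact, expanding gradient Ricci soliton in dimension three with $\Ric \ge 0$. A rigidity statement for such solitons, to the effect that an expanding three-dimensional soliton with pointwise pinched nonnegative Ricci curvature must be flat or compact, would then close the argument by contradiction.

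\textbf{The main obstacle.} The hardest step is the last one: ruling out nontrivial noncompact expanding solitons (or more generally immortal blow-down limits) under only \emph{pointwise}, rather than uniform, Ricci pinching. The hypothesis $r_1 \ge c r_3$ is scale-invariant and so survives passage to blow-down limits, but it provides no \emph{a priori} lower bound on the curvature away from the region where it is concentrated; in particular one cannot, on the face of it, exclude limits whose curvature decays so rapidly that an almost-Euclidean asymptotic cone appears and the pinching condition becomes vacuous in the limit. The extra assumption in the abstract, that the negative part of the sectional curvature decays quadratically, is exactly the kind of uniform decay hypothesis that restores control of the asymptotic geometry and makes the soliton step tractable. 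Without such additional decay, analyzing the asymptotic structure under pointwise pinching alone appears to be the core difficulty of Conjecture \ref{1.2}.
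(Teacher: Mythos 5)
The statement you are addressing is a \emph{conjecture} in this paper, not a theorem: the author proves it only under extra hypotheses (nonnegative sectional curvature, Theorem \ref{1.3}(a); quadratic curvature decay, Theorem \ref{1.3}(b); or a quadratic lower sectional bound, Theorem \ref{1.4}). Your outline tracks the paper's general strategy closely — preserved pinching, long-time existence via ruling out finite-time spherical blowups (Proposition \ref{2.1}), type-III curvature bound (Proposition \ref{2.13}), blowdown at $t\to\infty$, and soliton rigidity (Lemma \ref{4.6}) — so as a roadmap it is sound. But your diagnosis of the "main obstacle" is misplaced in two ways.

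First, you gloss over what the paper treats as the central technical difficulty: noncollapsing of the blowdown. You write of "basepoints chosen so that some curvature remains visible," but the real issue is to show that the rescaled flows $g_s(t)=s^{-1}g(st)$ do not collapse to a lower-dimensional limit as $s\to\infty$. This is Proposition \ref{1.6}, proven in Section \ref{sect3} via a delicate argument that excludes one- and two-dimensional limits using Cheeger--Fukaya--Gromov collapsing theory and the topological fact that $\R^3$ admits no Seifert fibration. That step is where the bulk of the paper's work lies, and it holds \emph{without} any extra decay hypothesis. Second, your worry that one "cannot exclude limits whose curvature decays so rapidly that an almost-Euclidean asymptotic cone appears and the pinching becomes vacuous" is partly addressed by the paper: Corollary \ref{1.7} shows cubic volume growth unconditionally, which already rules out the degenerate scenario you describe and sets up the Colding rigidity endgame. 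The true remaining gap is narrower: to run your soliton step one needs to identify the blowdown as an expanding gradient soliton, and the available tool (Schulze--Simon) requires a nonnegative curvature \emph{operator}, not just pinched nonnegative Ricci; absent that, the paper substitutes a spatial rescaling of a fixed time slice to produce a metric cone and then needs either two-sided curvature decay (Theorem \ref{1.3}(b)) or the Lebedeva--Petrunin weak curvature convergence (Theorem \ref{1.4}) to exploit the pinching on the cone. Under the bare hypotheses of Conjecture \ref{1.2} neither route closes, and the conjecture remains open.
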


We will think of Conjectures \ref{1.1} and \ref{1.2}
interchangeably.
They are apparently due to Hamilton,
who proved a result similar to Conjecture \ref{1.2}
for hypersurfaces in Euclidean space \cite{Hamilton (1994)}.
Conjecture \ref{1.2} can be considered to be a scale-invariant version of the
Bonnet-Myers theorem.  The latter says that if a complete Riemannian $n$-manifold
$(M, g)$ has $\Ric \ge (n-1) k^2 g$, with $k > 0$, then
$M$ is compact with diameter at most $\frac{\pi}{k}$. In Conjecture \ref{1.2},
rather than an explicit bound for the diameter, the claim is that
the diameter is finite.

To get a feeling why Conjecture \ref{1.1} might be true, consider a
Riemannian manifold $(M,g)$  with nonnegative Ricci curvature that is
strictly conical outside of a compact subset.  The
Ricci curvature vanishes in the radial direction of the cone.  The $c$-Ricci
pinching then
implies that $M$ is Ricci-flat on the conical region and hence flat there,
since the dimension is three. Then the link of the cone
consists of copies of round 
$S^2$'s and $\R P^2$'s. From the splitting theorem,
the link must be connected.
Since it bounds a compact $3$-manifold, it must be
$S^2$. The global nonnegativity of the
Ricci curvature now
implies that $M$ is isometric to $\R^3$.
This intuition will enter into the proof of
Theorem \ref{1.4} below.

We show that Conjectures \ref{1.1} and \ref{1.2} are true under an
extra curvature assumption.

\begin{theorem} \label{1.3}
  Conjecture \ref{1.1} is true if \\
  a. $(M,g)$ has nonnegative sectional curvature, or\\
  b. $(M,g)$ has quadratic curvature decay.
\end{theorem}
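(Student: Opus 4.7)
\medskip

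\noindent \textbf{Proof proposal.} I focus on (b) and then sketch how (a) reduces to the same picture. Assume $(M,g)$ is noncompact; by the equivalence of Conjectures \ref{1.1} and \ref{1.2} noted in the introduction, I may further assume $\Ric > 0$. The plan is to implement the heuristic sketched after Conjecture \ref{1.2}: identify the tangent cone at infinity with Euclidean $\R^3$, then invoke Bishop--Gromov rigidity to conclude $(M,g) \cong \R^3$, contradicting $\Ric > 0$.

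Fix a basepoint $p$. For any sequence of scales $\lambda_k \to 0$, consider the rescaled pointed manifolds $(M, \lambda_k^2 g, p)$. The quadratic decay of the negative part of $\sec$ gives a uniform lower bound $\sec \ge -C/r^2$ in the rescaled metrics on any annulus $r \ge r_0 > 0$, while $\Ric \ge 0$ yields Bishop--Gromov volume control. One extracts a subsequential limit which, by the scale-invariance of the construction, is a metric cone $(C, o)$ over a compact cross-section $\Sigma$; the limit inherits $\Ric \ge 0$. After upgrading convergence to smooth convergence on the regular part (see the obstacle below), the scale-invariant pointwise $c$-pinching transfers to $C$.

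On the smooth part of $C$, the radial direction is a null eigendirection for $\Ric$, so $r_1 = 0$; pinching then forces $\Ric_C \equiv 0$. In dimension three this makes $C$ flat, so $\Sigma$ has constant Gauss curvature $1$ and is a quotient of $S^2$. The Cheeger--Gromoll splitting theorem applied to $M$ rules out multiple ends (a line would split off a surface factor that pinching forces to be flat, contradicting $\Ric > 0$), so $\Sigma$ is connected; assuming $M$ orientable (else pass to the orientation cover), $\Sigma = S^2$ and $C = \R^3$. The resulting asymptotic volume ratio $\lim_{r \to \infty} \vol(B_r(p)) / (\omega_3 r^3) = 1$ and the equality case of Bishop--Gromov force $M \cong \R^3$, contradicting $\Ric > 0$.

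For (a), the Cheeger--Gromoll soul theorem presents $M$ as a vector bundle over a compact totally geodesic soul $S$; pinched strictly positive Ricci rules out $\dim S > 0$ (a positive-dimensional soul yields a flat direction or, for a two-dimensional soul in the orientation cover, a line giving a splitting $\R \times \Sigma$ incompatible with pinched $\Ric > 0$), so $M \cong \R^3$ diffeomorphically. The asymptotic cone exists as an Alexandrov cone by Burago--Gromov--Perelman, and Perelman's stability theorem smooths its regular part, after which the argument above applies. The principal obstacle in both cases is precisely this smoothness step: since the $c$-pinching is a $C^0$ condition on $\Ric$, one needs at least $C^2$ convergence of the rescaled metrics on annuli away from the apex. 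In case (b) the one-sided $\sec$ bound from quadratic decay does not obviously give smooth convergence, and the argument likely requires harmonic coordinates together with elliptic regularity to convert the $\Ric \ge 0$ control into higher-order control on the limit.
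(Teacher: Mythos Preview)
Your outline captures the endgame correctly --- once one knows the tangent cone at infinity is $\R^3$, Colding's volume-rigidity theorem finishes the argument, just as in the paper. But there is a genuine gap earlier, and it is precisely where the paper expends most of its effort.

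You assert that the rescalings $(M, \lambda_k^2 g, p)$ subconverge to a metric cone over a compact cross-section $\Sigma$, and then treat $\Sigma$ as a surface. This is not justified: with only $\Ric \ge 0$ (even together with a two-sided quadratic curvature bound on annuli), the blowdown can collapse to a $1$- or $2$-dimensional space. Nothing you have invoked rules this out; indeed the paper explicitly remarks that $\R^3$ admits collapsing sequences with bounded curvature. The paper's main technical contribution (Proposition~\ref{1.6} and Corollary~\ref{1.7}) is exactly that under the $c$-pinching hypothesis $(M,g_0)$ has cubic volume growth, and the proof goes through Ricci flow: one first shows the flow is eternal and type-III (Proposition~\ref{1.5}), and then rules out $1$- and $2$-dimensional large-time blowdown limits by a delicate argument (Section~\ref{sect3}) that uses the topology of $\R^3$ in an essential way. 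Only after this is there a $3$-dimensional tangent cone to analyze.

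Your acknowledged ``smoothness obstacle'' is downstream of the same gap: harmonic-coordinate regularity on the rescalings would require an injectivity-radius (equivalently, volume) lower bound, which is exactly the noncollapsing you have not established. For (b) the paper sidesteps this by rescaling not the original metric but a time-slice $g_\infty(u)$ of the Ricci-flow blowdown, where pseudolocality together with Shi's local estimates supply full derivative bounds and hence smooth convergence on annuli (Proposition~\ref{5.3}). For (a) the paper does not use the soul theorem at all; it invokes Schulze--Simon to identify the Ricci-flow blowdown as a gradient expanding soliton, which Lemma~\ref{4.6} then shows is flat.
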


Theorem \ref{1.3}(a) was proven earlier in
\cite{Chen-Zhu (2000)} using Ricci flow. Our proof also uses Ricci flow but is technically different.

\begin{theorem} \label{1.4}
  Conjecture \ref{1.1} is true if there is some $A < \infty$ so that
      the sectional curvatures of $(M, g)$ satisfy
      $K(m) \ge - \: \frac{A}{d(m,m_0)^2}$, where $m_0$ is
      some basepoint. 
  \end{theorem}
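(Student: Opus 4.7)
The plan is to argue by contradiction. Suppose $(M,g)$ satisfies the hypotheses of Theorem~\ref{1.4} but $M$ is neither compact nor flat. To extract information at infinity, fix a sequence $r_i \to \infty$ and consider the rescaled pointed manifolds $(M, r_i^{-2} g, m_0)$. Nonnegativity and $c$-pinching of the Ricci curvature, and crucially the quadratic lower bound $K(m) \ge -A\, d(m,m_0)^{-2}$, are all scale invariant, so by Bishop--Gromov the sequence is pre-Gromov--Hausdorff compact and subconverges to a tangent cone at infinity $(Y, y_0)$.

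The aim is to identify $Y$ with $\R^3$. By Cheeger--Colding theory for noncollapsed limits with $\Ric \ge 0$, $Y$ is a metric cone $C(\Sigma)$ over some compact length space $\Sigma$, smooth Riemannian on an open dense regular set carrying a limit Ricci tensor. On this regular set the pinching inequality $r_1 \ge c r_3$ survives in the limit; combined with the observation that the radial Ricci eigenvalue of any metric cone vanishes identically, pinching forces $Y$ to be Ricci-flat on its regular set, hence flat. Consequently $\Sigma$ is a spherical space form, i.e.\ $\Sigma = S^2/\Gamma$ with $\Gamma \le \OO(3)$ acting freely on $S^2$; the only possibilities are $\Sigma = S^2$ and $\Sigma = \R P^2$. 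To rule out $\R P^2$, note that for large $r$ the distance sphere $\partial B(m_0,r) \subset M$ is a closed $2$-manifold bounding the compact $3$-manifold $B(m_0,r)$ and Gromov--Hausdorff converges (after rescaling) to $\Sigma$; since $\R P^2$ is not null-cobordant, $\Sigma = S^2$ and $Y = \R^3$. Once one tangent cone at infinity is $\R^3$, the normalized volume ratio $\vol(B_r(m_0))/r^3$ converges to $\omega_3$, and the rigidity case of Bishop--Gromov forces $(M,g)$ to be isometric to Euclidean $\R^3$, contradicting the assumption that $M$ is non-flat. As a variant endpoint, one could instead use the fact that $Y=\R^3$ splits off a line together with Cheeger--Colding's almost-splitting theorem to produce a line in $M$, apply Cheeger--Gromoll to write $M = N^2 \times \R$, and then use pinching with $r_1 = 0$ to conclude $\Ric_M \equiv 0$ and hence $M$ flat.

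The main obstacle is that our upper curvature bound is only a global $|K| \le K_0$: under the rescaling by $r_i^{-2}$ this scales as $r_i^2 K_0 \to \infty$, so smooth Cheeger-type convergence to the tangent cone is unavailable and one must rely entirely on Gromov--Hausdorff convergence combined with Cheeger--Colding regularity on the regular set. Closely related is the need to ensure the blowdown is noncollapsed, i.e.\ that $(M,g)$ has Euclidean volume growth $\vol(B_r(m_0)) \ge c_0 r^3$; otherwise the tangent cone can have lower Hausdorff dimension and the cone structure above breaks down. This is where I expect the full force of the hypotheses to be needed: the combination of $c$-pinching, nonnegative Ricci, and the scale-invariant lower bound $K \ge -A d(\cdot,m_0)^{-2}$ should rule out asymptotically cylindrical or otherwise collapsed behavior, perhaps through an iterated blowdown argument or by showing that any collapsed tangent cone already splits off an $\R$-factor, whence the short splitting-theorem argument of the previous paragraph applies and still produces flatness.
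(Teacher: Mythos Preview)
Your outline captures the right endgame---blowdown to a cone, use pinching plus vanishing radial Ricci to force flatness, conclude $\R^3$, invoke Colding---but it has two genuine gaps, one of which you flag and one you do not.

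\textbf{Cubic volume growth.} You correctly identify that noncollapsing of the blowdown is the crux, but your proposed remedies do not work. A collapsed tangent cone at infinity of a manifold with $\Ric\ge 0$ need not contain a line (it could be a half-line $[0,\infty)$ or a $2$-dimensional cone), so the splitting shortcut fails. In the paper this step occupies Sections~\ref{sect2}--\ref{sect4}: one runs Ricci flow, shows it is immortal and type-III (Proposition~\ref{1.5}), then proves a delicate noncollapsing result for the parabolic blowdown (Proposition~\ref{1.6}) by ruling out $1$- and $2$-dimensional limits via the topology of $\R^3$ (Seifert fibrations, Lemma~\ref{3.7}, and the fact that $\R^3$ admits no Seifert fibration). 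Only then does Corollary~\ref{1.7} yield cubic volume growth for $(M,g_0)$. None of this is visible from the static hypotheses alone, and the quadratic lower sectional bound plays no role here.

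\textbf{Passing the pinching to the limit.} You assert that on the regular set of the cone ``the pinching inequality $r_1\ge cr_3$ survives in the limit.'' This is where the argument breaks. Cheeger--Colding regularity under $\Ric\ge 0$ gives only $C^{1,\alpha}$ convergence of metrics on the regular set; there is no limit Ricci tensor in a pointwise sense, and curvature need not converge. This is exactly the role of the hypothesis $K(m)\ge -A\,d(m,m_0)^{-2}$: it makes the blowdowns locally Alexandrov with a uniform lower curvature bound away from the vertex, so one can invoke the Lebedeva--Petrunin weak convergence of curvature operators (Section~\ref{sect6}). The paper then converts $c$-pinching into the integral inequality of Lemma~\ref{6.15}, passes it through weak convergence to get $R_{X_\infty}=0$ via (\ref{6.18})--(\ref{6.19}), and reads off from (\ref{6.7}) that $d\omega_Y=\dvol_Y$ on the link, forcing $Y$ to be the round $S^2$. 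Without this machinery your step ``pinching survives in the limit'' is unjustified, and indeed your argument as written never uses the quadratic lower sectional bound in any essential way.
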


Theorem \ref{1.4} implies Theorem \ref{1.3}
but we state it separately, since the
proof of Theorem \ref{1.4} uses results from the preprint
\cite{Lebedeva-Petrunin}.

Besides the particular results in Theorems \ref{1.3} and \ref{1.4},
we prove more general results that may lead to a
proof of Conjecture \ref{1.1}. The next proposition says that if $(M,g_0)$
is noncompact and
satisfies the hypotheses of Conjecture \ref{1.1} then the ensuing Ricci flow
exists for all positive time and is type-III.

\begin{proposition} \label{1.5}
  Given $(M, g_0)$ as in Conjecture \ref{1.1} with $M$ noncompact,
  there is a smooth Ricci flow solution
  $(M, g(\cdot))$ with $g(0) = g_0$ that exists for all $t \ge 0$. There
  is a constant $C < \infty$ so that $\parallel \Rm(g(t)) \parallel_\infty \le
  \frac{C}{t}$ for all $t \ge 0$.
\end{proposition}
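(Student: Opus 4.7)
The plan is to combine Shi's short-time existence and maximum-principle preservation of the pinching cone with a parabolic blow-up argument that, upon failure of the Type-III bound, reduces contradictorily to Theorem \ref{1.3}(a).

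First, Shi's short-time existence theorem for complete manifolds with bounded curvature produces a maximal smooth Ricci flow $g(\cdot)$ on $M \times [0, T_{\max})$ with $g(0) = g_0$, along which $|\Rm|$ is locally bounded. By Hamilton's maximum principle for systems, both $\Ric \ge 0$ and the pinching $r_1 \ge c r_3$ persist on $[0, T_{\max})$: the relevant cone of Ricci tensors is closed, convex (by Weyl's inequalities) and $O(3)$-invariant, and a direct computation using Hamilton's pointwise ODE on the curvature-operator eigenvalues $\lambda \ge \mu \ge \nu$ in dimension three ($\dot\lambda = \lambda^2 + \mu\nu$, $\dot\mu = \mu^2 + \lambda\nu$, $\dot\nu = \nu^2 + \lambda\mu$) gives $\dot r_1 - c \dot r_3 = 2(1-c)\mu^2 \ge 0$ on the boundary face $\{r_1 = c r_3\}$ and $\dot r_1 = 2\mu^2 \ge 0$ on $\{r_1 = 0\}$.

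Suppose, for contradiction, that no finite $C$ satisfies $\sup_M |\Rm(g(t))| \le C/t$; this subsumes both $T_{\max} < \infty$ and $T_{\max} = \infty$ with inadequate curvature decay. By a Hamilton-Perelman point-picking argument, select $(x_k, t_k)$ with $Q_k := |\Rm|(x_k, t_k) \to \infty$, $t_k Q_k \to \infty$, and $|\Rm| \le 2 Q_k$ on a parabolic neighborhood of $(x_k, t_k)$ of spatial radius $A Q_k^{-1/2}$ and temporal extent $A Q_k^{-1}$ for any fixed $A$ and $k$ large. Rescale by setting $\tilde g_k(s) := Q_k\, g(t_k + s Q_k^{-1})$. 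These flows are defined on $[-t_k Q_k, 0]$ with uniformly bounded curvature on the parabolic neighborhood of $(x_k, 0)$, and are uniformly $\kappa$-noncollapsed by Perelman's theorem in the complete noncompact setting (valid because $g_0$ has bounded curvature). Hamilton's compactness theorem then extracts a subsequential pointed Cheeger-Gromov limit $(M_\infty, g_\infty(\cdot), x_\infty)$, a complete, nontrivial, $\kappa$-noncollapsed ancient Ricci flow on a 3-manifold with $|\Rm(g_\infty)|(x_\infty, 0) = 1$.

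The limit inherits $\Ric \ge 0$ and $r_1 \ge c r_3$ by scale invariance. Since it is a complete ancient Ricci flow in dimension three with bounded curvature on compact time intervals, Chen's theorem (equivalently, the Hamilton-Ivey pinching estimate passed to the ancient limit) gives $K(g_\infty) \ge 0$. Further, $M_\infty$ is noncompact: each $(M, \tilde g_k(0), x_k)$ is connected and noncompact, and a pointed Cheeger-Gromov limit of such manifolds under uniform curvature and noncollapsing bounds cannot be compact, for a compact limit would embed as a clopen subset of the noncompact connected approximator for large $k$. Applying Theorem \ref{1.3}(a) at the time-zero slice of the limit, $(M_\infty, g_\infty(0))$ must be flat, contradicting $|\Rm(g_\infty)|(x_\infty, 0) = 1$. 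Hence $T_{\max} = \infty$ and the Type-III bound holds. The main technical obstacle is invoking the complete-noncompact versions of Perelman's $\kappa$-noncollapsing and Hamilton's compactness theorem (standard but requiring care in the bookkeeping) and the topological step that Cheeger-Gromov limits inherit noncompactness from connected noncompact approximators; once these are in hand, the reduction to Theorem \ref{1.3}(a) via scaling is essentially forced.
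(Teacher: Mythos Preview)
Your overall blow-up strategy is reasonable, but there are two issues, one of which is a genuine gap.

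\textbf{Gap: the noncollapsing step.} You assert that the rescaled flows are ``uniformly $\kappa$-noncollapsed by Perelman's theorem in the complete noncompact setting (valid because $g_0$ has bounded curvature).'' Bounded curvature alone is not enough: Perelman's no-local-collapsing on a complete noncompact manifold requires some noncollapsing of the \emph{initial} slice (e.g.\ a positive lower bound on injectivity radius, or on the volume of unit balls), and Conjecture~\ref{1.1} does not assume this. The paper flags this explicitly (``This does not follow from Perelman's no local collapsing result, since we do not assume that the initial metric has positive injectivity radius'') and works around it by allowing the blow-up limit to be a Ricci flow on a three-dimensional \'etale groupoid rather than a manifold; the Einstein condition obtained in the limit still forces the orbit space to be compact via Bonnet--Myers, giving the contradiction. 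Your argument, by contrast, needs an honest smooth $3$-manifold limit in order to invoke Theorem~\ref{1.3}(a), so the collapsing possibility must be dealt with and cannot simply be waved away.

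\textbf{Logical ordering / difference in method.} Even setting that aside, your route differs from the paper's. You use qualitative preservation of the pinching cone, then Chen's nonnegativity for ancient $3$-dimensional solutions, then Theorem~\ref{1.3}(a) as a black box. Within this paper that is circular: Theorem~\ref{1.3}(a) is proved in Section~\ref{sect5} \emph{using} Proposition~\ref{1.5} (via Propositions~\ref{3.1} and~\ref{4.1}). You could instead cite the Chen--Zhu result \cite{Chen-Zhu (2000)} directly, but then you are importing a theorem whose proof already contains essentially the long-time existence and Type-III argument you are trying to establish. The paper takes a more self-contained and more robust path: it proves the quantitative pinching estimate $R^{\sigma-2}\,|\Ric-\tfrac13 Rg|^2 \le (3/2t)^\sigma$ (equation~(\ref{2.3})) via a maximum-principle argument, and this forces any blow-up limit with $t_iQ_i\to\infty$ to satisfy $\Ric=\tfrac13 Rg$, i.e.\ to be a round space form, hence compact. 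That argument is purely pointwise on curvature and therefore survives passage to a groupoid limit, which is exactly what is needed to close the collapsing gap above.
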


The main technical result of this paper is that a three dimensional
Ricci flow solution
$(M,g(t))$ with positive Ricci curvature, that satisfies the conclusion
of Proposition
\ref{1.5}, admits a three-dimensional blowdown limit.

\begin{proposition} \label{1.6}
  Let $(M, g_0, m_0)$ be a complete connected pointed
  Riemannian manifold of dimension
  three, with bounded sectional curvature and positive Ricci curvature.
  Suppose that the ensuing Ricci flow exists for all $t \ge 0$, and that
  there is some $C < \infty$ so that $\parallel \Rm(g(t)) \parallel_\infty \le
  \frac{C}{t}$ for all $t \ge 0$.
  For $s > 0$, put $g_s(t) = s^{-1} g(st)$.
  Then for some sequence $s_i \rightarrow \infty$,
  there is a limit $\lim_{i \rightarrow \infty} g_{s_i}(\cdot) =
  g_\infty(\cdot)$ in the pointed Cheeger-Hamilton topology.
  The Ricci flow solution $g_\infty(u)$ lives on a three dimensional manifold
  and is defined for $u > 0$.
\end{proposition}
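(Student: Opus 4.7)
The plan is to apply Hamilton's pointed compactness theorem for sequences of Ricci flows to the family $(M, g_s(\cdot), m_0)$ as $s \to \infty$. The theorem requires two inputs: uniform $C^k$ bounds on $\Rm(g_s(u))$ over every compact sub-interval of $u \in (0, \infty)$, and a uniform positive lower bound on $\inj_{g_s(u_0)}(m_0)$ at some fixed time $u_0 > 0$, which I take to be $u_0 = 1$. A standard diagonal argument then extracts a subsequential limit as a smooth Ricci flow on $(0, \infty)$.

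The curvature bounds follow immediately from parabolic scaling: since $g_s(u) = s^{-1} g(su)$, one has
\[
|\Rm(g_s(u))|_{g_s(u)} \;=\; s\,|\Rm(g(su))|_{g(su)} \;\leq\; s \cdot \frac{C}{su} \;=\; \frac{C}{u},
\]
which is uniform in $s$. On any $[u_1, u_2] \subset (0, \infty)$ this gives a uniform $L^\infty$ bound, and Shi's local derivative estimates upgrade it to uniform bounds on $|\nabla^k \Rm(g_s(u))|$ for every $k \ge 0$.

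The main obstacle will be the non-collapsing, i.e.\ producing a $\delta > 0$, independent of $s$, with $\inj_{g_s(1)}(m_0) \ge \delta$; equivalently $\inj_{g(s)}(m_0) \ge \delta \sqrt{s}$ for $s$ large. Here the hypotheses conspire nicely: positivity of Ricci is preserved by the three-dimensional Ricci flow (Hamilton), and the type-III decay $|\Rm(g(t))| \le C/t$ identifies $r \sim \sqrt{t}$ as the natural parabolic scale on which $|\Rm| \le r^{-2}$. A Perelman-type $\kappa$-noncollapsing argument, based on monotonicity of the reduced volume with basepoint $(m_0, 0)$, should supply
\[
\vol_{g(t)}\bigl(B_{g(t)}(m_0, r)\bigr) \;\ge\; \kappa\, r^3
\]
with $\kappa > 0$ independent of $t$, the initial reduced-volume lower bound being provided by the bounded geometry of $(M, g_0)$. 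Rescaling yields a uniform volume lower bound for balls of fixed radius in $g_s(1)$, and Cheeger's lemma then converts this (together with the curvature bound) into the required injectivity radius lower bound.

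With both hypotheses verified, Hamilton's compactness theorem produces a subsequence $s_i \to \infty$ and a smooth limit $(M_\infty, g_\infty(\cdot), m_\infty)$ in the pointed Cheeger--Hamilton topology. The curvature bound $|\Rm(g_\infty(u))| \le C/u$ is inherited, so $g_\infty$ is defined on all of $u > 0$; the volume lower bound at $u = 1$ prevents collapse at $m_\infty$, so $M_\infty$ is three-dimensional. The hard part is securing the non-collapsing constant $\kappa$ uniformly as $s \to \infty$: this is precisely the step where the combination of $\Ric > 0$ and the type-III decay must do the work, since a naive application of Perelman's theorem on $[0, T)$ would degenerate as $T \to \infty$.
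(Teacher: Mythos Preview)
Your curvature-bound step is correct, and you correctly locate the crux: a uniform lower bound on $\inj_{g_s(1)}(m_0)$ as $s \to \infty$. But the proposal stops precisely where the proof must begin. You write that a ``Perelman-type $\kappa$-noncollapsing argument, based on monotonicity of the reduced volume with basepoint $(m_0, 0)$, should supply'' the needed volume bound, and then concede in your last sentence that the naive argument degenerates as $T \to \infty$ --- yet you give no mechanism by which $\Ric > 0$ together with the type-III decay actually rescues it. Basing at $(m_0, 0)$ means the forward $\mathcal{L}_+$ reduced volume of Feldman--Ilmanen--Ni, whose monotonicity produces an \emph{upper} bound at large time, not a lower one. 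Basing instead at $(m_0, t)$ and running Perelman's backward argument to time $0$ requires a noncollapsing input on $(M, g_0)$ --- which is not among the hypotheses, since bounded curvature and completeness do not by themselves force positive injectivity radius --- and in any case yields $\kappa = \kappa(t)$ with no control as $t \to \infty$. What remains is the assertion that ``the hypotheses conspire nicely'', not a proof; this is a genuine gap.

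The paper's argument is entirely different and contains no analytic noncollapsing estimate. It proceeds by contradiction. If no three-dimensional blowdown limit existed, then by Cheeger--Fukaya--Gromov collapsing theory the rescaled manifolds $(M, g_s(u))$ would, for all large $s$, be Gromov--Hausdorff close to a one- or two-dimensional base with small fibers. Since $\Ric > 0$ forces $M \cong \R^3$ (Schoen--Yau), each case is then excluded topologically. For one-dimensional limits, a fiber circle or $T^2$ over the basepoint is $\pi_1$-nontrivial in large $g_{s_0}(u)$-balls but must die in a $g_s(u)$-ball of bounded radius once $s \gg s_0$; tracking the transition value of $s$ forces a two-dimensional limit there, and one shows the loop then represents a power of a regular fiber in the resulting Seifert piece and hence cannot die --- a contradiction. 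For two-dimensional limits, one uses the distance-distortion estimates (coming from $\Ric \ge 0$ and the type-III bound) to compare balls at successive scales $s_j = \Lambda^j s_0$ and shows that the associated circle fibrations can be isotoped to agree on overlaps; they then glue to a global Seifert fibration of $\R^3$, which is known not to exist.
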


The issue in proving Proposition \ref{1.6} is to rule out collapsing
at large time.
Examples of Proposition \ref{1.6} come from expanding gradient solitons,
for which the tangent cone at infinity can be the cone over any
two-sphere with Gaussian curvature greater than one
\cite{Deruelle (2016)}.  Of course, these are
not $c$-Ricci pinched (Lemma \ref{4.6}).

Using distance distortion estimates, Proposition \ref{1.6} has the following
implication about the initial metric.

\begin{corollary} \label{1.7}
  Under the hypotheses of Proposition \ref{1.6},
  the Riemannian manifold $(M, g_0)$ has cubic volume growth.
  \end{corollary}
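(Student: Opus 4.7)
The plan is to deduce cubic volume growth from the three-dimensional blowdown limit produced by Proposition \ref{1.6}, combined with the Bishop-Gromov inequality and two standard Ricci-flow monotonicities. The upper bound $\vol_{g_0}(B_{g_0}(m_0,R))\le c_+R^3$ is immediate from $\Ric(g_0)\ge 0$ and Bishop-Gromov, so the content is the matching lower bound.

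For the lower bound, fix $u=1$ and any $r_0>0$. Because $(M_\infty,g_\infty(1))$ is a smooth three-dimensional Riemannian manifold, the limit ball $B_{g_\infty(1)}(m_\infty,r_0)$ has a definite positive volume $V$, and pointed Cheeger-Hamilton convergence then gives $\vol_{g_{s_i}(1)}(B_{g_{s_i}(1)}(m_0,r_0))\to V$. Unraveling $g_{s_i}(1)=s_i^{-1}g(s_i)$, this becomes
\[
\vol_{g(s_i)}\bigl(B_{g(s_i)}(m_0,r_0\sqrt{s_i})\bigr)\;\ge\;\tfrac{V}{2}\,s_i^{3/2}
\]
for $i$ large.

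Two Ricci-flow inputs now transfer this to the initial time. First, since $\Ric(g(t))\ge 0$ the volume form is pointwise nonincreasing, so $\vol_{g_0}(\Omega)\ge\vol_{g(s_i)}(\Omega)$ for every Borel set $\Omega$. Second, by Hamilton's distance distortion estimate applied with the Ricci bound $\|\Ric(g(t))\|\le 2C/t$ and optimizing the auxiliary scale at $\sqrt{t}$,
\[
d_{g_0}(x,m_0)\;\le\;d_{g(s_i)}(x,m_0)\;+\;c\,\sqrt{s_i}
\]
for a constant $c$ depending only on $C$ and the initial curvature bound on $(M,g_0)$. Consequently $B_{g(s_i)}(m_0,r_0\sqrt{s_i})\subseteq B_{g_0}(m_0,(r_0+c)\sqrt{s_i})$, and concatenating yields $\vol_{g_0}(B_{g_0}(m_0,R_i))\ge c_-R_i^3$ along the sequence $R_i:=(r_0+c)\sqrt{s_i}\to\infty$. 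Bishop-Gromov monotonicity of $R\mapsto\vol_{g_0}(B_{g_0}(m_0,R))/R^3$ then promotes this to $\vol_{g_0}(B_{g_0}(m_0,R))\ge c_-R^3$ for every $R>0$.

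The step I expect to require the most care is the distance distortion. Hamilton's lemma is only effective when $d_{g(t)}(x,m_0)\gtrsim\sqrt{t}$, and the bound $\|\Rm(g(t))\|\le C/t$ is not integrable at $t=0$; one therefore has to splice the long-time integrated estimate together with a short-time estimate coming from the hypothesized $\|\Rm(g_0)\|_\infty<\infty$, and handle separately the possibility that $d_{g(t)}(x,m_0)$ drops below $\sqrt{t}$ at some intermediate time, in which case the monotonicity $d_{g(t)}\le d_{g_0}$ combined with the continuity of $t\mapsto d_{g(t)}(x,m_0)$ already yields $d_{g_0}(x,m_0)=O(\sqrt{s_i})$ directly.
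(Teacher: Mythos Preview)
Your proposal is correct and follows essentially the same route as the paper: extract a definite volume from the three-dimensional blowdown at a fixed rescaled time, use $\Ric\ge 0$ to compare volumes between $g_0$ and $g(s_i)$, use distance distortion to contain the time-$s_i$ ball in a $g_0$-ball of comparable radius, and then invoke Bishop--Gromov monotonicity to pass from the sequence $R_i$ to all radii. The paper packages your flagged technical step into Lemma~\ref{2.15} (yielding $d_0 - C'\sqrt{t}\le d_t\le d_0$ by citing the Kleiner--Lott distance distortion estimates), which indeed combines the short-time bound from $\|\Rm(g_0)\|_\infty<\infty$ with the long-time bound from $\|\Rm(g(t))\|_\infty\le C/t$ exactly as you anticipate.
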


The proof of Theorem \ref{1.3}(a) then uses a Ricci flow result of Simon-Schulze
\cite{Schulze-Simon (2013)}. To prove Theorem \ref{1.3}(b) we apply a
spatial
rescaling argument to $(M,g)$.

The proof of Theorem \ref{1.4} uses Corollary \ref{1.7} and 
results of \cite{Lebedeva-Petrunin} about weak
convergence of curvature operators. Assuming that $M$ is noncompact,
we apply a spatial rescaling to
$(M, g)$ to get an locally Alexandrov
three dimensional tangent cone at
infinity.  If $(M, g)$ is nonflat then
the weak convergence of curvature operators, along with
the $c$-Ricci
pinching, forces the
tangent cone at infinity of $(M, g)$ to be $\R^3$, which contradicts the
nonflatness assumption.

One could ask about generalizations of
Conjectures \ref{1.1} and \ref{1.2} without the uniform
curvature bound, or in higher dimension.
However, in this paper we stick with three dimensions and bounded
sectional curvature. The paper \cite{Ni-Wu (2007)} shows compactness in general dimension
for complete Riemannian manifolds that have bounded sectional curvature and pointwise pinched positive
curvature operator. Having a positive curvature operator allows the papers
\cite{Chen-Zhu (2000),Ni-Wu (2007)}
to apply the
Gromoll-Meyer injectivity radius bound and Hamilton's differential Harnack inequality.
As we do not make an assumption of positive curvature operator, we have to develop
different tools.

The structure of the paper is the following.  In Section \ref{sect2} we prove
Proposition \ref{1.5} and give some distance distortion estimates.
In Section \ref{sect3} we prove Proposition \ref{1.6}.  Section \ref{sect4} has the
proof of Corollary \ref{1.7}.  In Section \ref{sect5} we prove Theorem \ref{1.3} and
in Section \ref{sect6} we prove Theorem \ref{1.4}.  More detailed descriptions
are at the beginnings of the sections.

I thank Nina Lebedeva for sending me a preliminary version of
\cite{Lebedeva-Petrunin}.
I also thank Christoph B\"ohm, Simon Brendle, Yi Lai, Man-Chun Lee, Yu Li and the referee
for comments on an earlier version of this paper.

Note: After this paper was submitted for publication, Conjecture \ref{1.1}
was proved by Deruelle, Schulze and Simon
\cite{Deruelle-Schulze-Simon (2022)}.  The proof uses Propositions
\ref{1.5} and \ref{1.6} of the present paper.  The assumption of bounded
sectional curvature was then removed by Lee and Topping \cite{Lee-Topping (2022)}.

\section{Long-time existence and curvature decay} \label{sect2}

In this section we prove Proposition \ref{1.5}.  
We first show that the Ricci flow exists for all $t > 0$. The proof is similar
to an argument in Hamilton's original Ricci flow paper
\cite{Hamilton (1982)} about what could possibly happen at a curvature
blowup under the Ricci pinching assumption.  When applied to long-time
solutions, essentially the same argument is used to rule out type-II
solutions, thereby proving the curvature bound in Proposition \ref{1.5}.
Using the curvature bound, we give some distance distortion estimates
that will be important in Section \ref{sect3}.

We begin by recalling some facts about Ricci flow.
Let $(M, g_0)$ be a Riemannian
manifold as in the statement of Conjecture \ref{1.1}.
Let $(M, g(\cdot))$
denote the unique maximal Ricci flow solution with
initial time slice $g(0) = g_0$,
having complete time slices and bounded curvature on compact time
intervals.
The condition $\Ric \ge 0$ is preserved under Ricci flow.
Using the weak maximum principle, one can show that
being $c$-Ricci pinched is preserved under Ricci flow.
Using the strong maximum principle, if $(M, g_0)$ is nonflat
then for $t > 0$, the Ricci curvature is positive.
Hence we can assume that $(M, g_0)$ has positive Ricci curvature.
This shows the equivalence between Conjecture \ref{1.1} and Conjecture \ref{1.2}.

Under the hypotheses of Conjecture \ref{1.2}, to argue by contradiction,
hereafter we also assume that $M$ is noncompact.
Then it is diffeomorphic to $\R^3$ \cite{Schoen-Yau (1982)}.

\begin{proposition} \label{2.1}
  The Ricci flow solution $(M, g(\cdot))$ exists for all $t \ge 0$.
\end{proposition}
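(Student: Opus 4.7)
The strategy is blowup analysis at the putative singular time, following the approach in Hamilton's 1982 paper \cite{Hamilton (1982)}. Suppose for contradiction that the maximal existence time $T < \infty$. Then by Shi's derivative estimates and the standard continuation criterion, $\sup_M |\Rm|(\cdot, t) \to \infty$ as $t \to T$. A point-picking argument yields a sequence $(m_i, t_i) \in M \times [0, T)$ with $t_i \to T$, $Q_i := |\Rm|(m_i, t_i) \to \infty$, and $|\Rm| \le 2 Q_i$ on parabolic neighborhoods of $(m_i, t_i)$ of spatial size $Q_i^{-1/2}$ and temporal size $Q_i^{-1}$. The rescaled flows $g_i(s) := Q_i \cdot g(t_i + s/Q_i)$ are Ricci flows with $|\Rm|_{g_i}(m_i, 0) = 1$ and uniform curvature bounds on growing parabolic neighborhoods; moreover $\Ric \ge 0$ and $c$-Ricci pinching are scale invariant, hence preserved.

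To extract a limit, I bound the injectivity radius at $(m_i, 0)$ in $g_i$ from below, either via Perelman's $\kappa$-non-local collapsing theorem applied on the finite interval $[0, T)$, or by combining $\Ric \ge 0$ Bishop-Gromov volume comparison with Cheeger's lemma and the curvature bounds. Hamilton's Cheeger-Gromov compactness theorem then yields a pointed Cheeger-Hamilton subsequential limit $(M_\infty, g_\infty(\cdot), m_\infty)$, a complete $3$-dimensional Ricci flow defined on a time neighborhood of $0$, with $\Ric \ge 0$, $c$-Ricci pinched, and $|\Rm|_{g_\infty}(m_\infty, 0) = 1$; in particular $R_{g_\infty}(m_\infty, 0) > 0$ by the pinching.

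I next apply Hamilton's pinching-improvement estimate: for $3$-dimensional Ricci flow with $c$-Ricci pinching, the quantity $f := |\mathring{\Ric}|^2 R^{\delta - 2}$ is bounded on $M \times [0, T)$ for some small $\delta = \delta(c) > 0$. Since $|\mathring{\Ric}|^2$ scales as $|\Rm|^2$ and $R^{2 - \delta}$ scales as $|\Rm|^{2-\delta}$, one has $f_{g_i} = Q_i^{-\delta} f_g$, so $f_{g_i}(m_i, 0) \to 0$. Passing to the pointed limit yields $f_{g_\infty}(m_\infty, 0) = 0$, and by the parabolic strong maximum principle together with the $c$-pinching (which makes $R$ strictly positive throughout the connected limit), $\mathring{\Ric} \equiv 0$ at time $0$. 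Thus $g_\infty(0)$ is Einstein, and in dimension three it has constant positive sectional curvature, so $M_\infty$ is a finite isometric quotient of the round $S^3$. In particular $M_\infty$ is compact.

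The topological contradiction is then immediate. Cheeger-Hamilton convergence provides embeddings $\phi_i : K_i \hookrightarrow M$ from an exhaustion $\{K_i\}$ of $M_\infty$ with $\phi_i^* g_i \to g_\infty$; since $M_\infty$ is compact we may take $K_i = M_\infty$ for large $i$, and the image $\phi_i(M_\infty)$ is then open (by invariance of domain) and compact (hence closed) in $M$, so it is a nonempty clopen subset of the connected manifold $M$, forcing $\phi_i(M_\infty) = M$, in contradiction with noncompactness of $M$. The main obstacle will be justifying the pinching-improvement bound on $f$ in the noncompact setting, as Hamilton's original maximum-principle argument is for closed manifolds; one expects to use the weak maximum principle for noncompact Ricci flow (in the style of Chen-Zhu/Ecker-Huisken), relying on the uniform sectional curvature bound and the $c$-pinching to control $f$ at spatial infinity.
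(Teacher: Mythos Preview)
Your overall strategy matches the paper's: both arguments rescale at a putative finite-time singularity, use Hamilton's pinching-improvement estimate $f = R^{\sigma-2}|\mathring{\Ric}|^2 \to 0$ under blowup, and conclude that the limit is a spherical space form, contradicting noncompactness.  The paper even makes your acknowledged ``main obstacle'' precise: it derives the differential inequality $(\partial_t - \Delta) f^{1/\sigma} \le -\frac{2}{3} f^{2/\sigma}$ and applies the noncompact weak maximum principle to get $f \le (3/2t)^{\sigma}$, so that part of your outline can indeed be completed.

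The genuine gap is your injectivity radius bound.  Neither of your two proposed mechanisms works here.  Perelman's $\kappa$-noncollapsing on a complete noncompact flow requires the initial metric to have bounded geometry, in particular a positive lower bound on the injectivity radius; the hypotheses of Conjecture~\ref{1.2} give only bounded sectional curvature and $\Ric > 0$, and the paper explicitly flags that one cannot invoke Perelman for this reason.  Your alternative, ``Bishop--Gromov plus Cheeger'', also fails: Bishop--Gromov with $\Ric \ge 0$ gives \emph{upper} bounds on volume ratios, not the uniform lower bound on $\vol B(m_i,Q_i^{-1/2})$ that Cheeger's lemma needs.  Nothing in the hypotheses rules out the rescaled sequence collapsing.

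The paper handles this by \emph{not} assuming an injectivity radius bound.  It splits into two cases: if $Q_i \inj_{g(t_i)}(m_i)^2$ stays bounded below, one takes an ordinary manifold limit; if not, one passes to a limit in the category of \'etale groupoids (equivalently, as in Remark~\ref{2.12}, one pulls back to tangent spaces and argues via Bonnet--Myers on the orbit space).  In either case the pinching forces the limit to have constant positive curvature, and Bonnet--Myers on the quotient gives compactness.  You need some device of this kind; as written, your extraction of a smooth $3$-dimensional limit is unjustified.
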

\begin{proof}
  We have
\begin{equation} \label{2.2}
  r_1 \ge c r_3 \Rightarrow r_1 \ge \frac12 c (r_2 + r_3) \Rightarrow
  \left( 1 + \frac12 c \right) r_1 \ge \frac12 c(r_1 + r_2 + r_3).
\end{equation}
Hence $\Ric \ge \rho R$, where $\rho = \frac{c}{2+c} \in \left( 0, \frac13
\right]$, and $R$ denotes the scalar curvature. Put $\sigma = \rho^2$.

  Suppose that the maximal Ricci flow solution is on a finite time interval
  $[0, T)$.
  We claim first that for all $t \in [0,T)$, we have
  \begin{equation} \label{2.3}
    R^{\sigma-2} \left| \Ric - \frac13 R g(t) \right|^2 \le
    \left( \frac{3}{2t} \right)^\sigma
  \end{equation}
  everywhere on $M$.
To prove this, we combine methods from
  \cite[Pf. of Proposition 3]{Brendle-Huisken-Sinestrari (2011)} and
  \cite[Pf of Lemma 6.1]{Chen-Zhu (2000)}. Put
  \begin{equation} \label{2.4}
f =  R^{\sigma-2} \left| \Ric - \frac13 R g(t) \right|^2.
    \end{equation}
  From the bounded curvature assumption, $f$ is uniformly bounded above
  at time zero.
  From \cite[p. 539]{Brendle-Huisken-Sinestrari (2011)} and
  \cite[Eqn. (76)]{Chen-Zhu (2000)}, which are based on
  \cite[Lemma 10.5]{Hamilton (1982)},
  \begin{equation} \label{2.5}
    \left( \frac{\partial}{\partial t} - \triangle \right) f \le
    2(1-\sigma) \left\langle \frac{\nabla R}{R}, \nabla f \right\rangle -
    \sigma (1-\sigma) R^{\sigma-4} \left| \Ric - \frac13 R g(t) \right|^2
    |\nabla R|^2 - \frac23 \sigma f^{1+\frac{1}{\sigma}}.
  \end{equation}
  If $M$ were compact then we could immediately derive (\ref{2.3}) using the
  weak maximum principle, as in
  \cite[Proposition 3]{Brendle-Huisken-Sinestrari (2011)}. If $M$ is noncompact
  then the possible unboundedness of $\frac{\nabla R}{R}$ is an issue.
  To get around this, using
  \begin{equation} \label{2.6}
    2 \left\langle \frac{\nabla R}{R}, \nabla f \right\rangle \le
    \sigma f \left| \frac{\nabla R}{R} \right|^2 +
    \frac{|\nabla f|^2}{\sigma f},
  \end{equation}
  we obtain
  \begin{equation} \label{2.7}
    \left( \frac{\partial}{\partial t} - \triangle \right) f \le
\frac{1-\sigma}{\sigma} \frac{|\nabla f|^2}{f} - 
\frac23 \sigma f^{1+\frac{1}{\sigma}}.
  \end{equation}
  Equivalently,
  \begin{equation} \label{2.8}
    \left( \frac{\partial}{\partial t} - \triangle \right)
    f^{\frac{1}{\sigma}} \le
- \: \frac23 f^{\frac{2}{\sigma}}
  \end{equation}
    in the barrier sense.
  From the weak maximum principle,
  \begin{equation} \label{2.9}
    \sup_{m \in M} f^{\frac{1}{\sigma}}(m,t) \le \frac{3}{2t},
  \end{equation}
  which proves the claim.

  There is a sequence $\{t_i\}_{i=1}^\infty$ of times
    increasing to $T$, and points $\{m_i\}_{i=1}^\infty$ in $M$ so
    that $\lim_{i \rightarrow \infty} |\Rm(m_i, t_i)| = \infty$ and
    $|\Rm(m_i, t_i)| \ge \frac12 \sup_{(m,t) \in M \times [0,t_i]}
    |\Rm(m, t)|$.
    Put $Q_i = |\Rm(m_i, t_i)|$ and
    $g_i(x,u) = Q_i g(x, t_i + Q_i^{-1} u)$. Then
    $g_i$ is a Ricci flow solution with curvature norm equal to one at $(m_i, 0)$, and curvature
    norm uniformly bounded above by two for
    $u \in [-Q_i t_i, 0]$.

    Suppose first that for some $i_0 > 0$ and all $i$,
    we have $Q_i \inj_{g(t_i)}(m_i)^2 \ge i_0$.
    (This does not follow from Perelman's no local collapsing
    result,
    since we do not assume that the initial metric has positive
    injectivity radius.)
    After passing to
    a subsequence, there is a pointed Cheeger-Hamilton limit
    \begin{equation} \label{2.10}
    \lim_{i \rightarrow \infty} (M, g_i(\cdot), m_i) = 
    (M_\infty, g_\infty(\cdot), m_\infty),
    \end{equation}
    where
    $g_\infty(u)$ is defined for $u \in (- \infty, 0]$.
  The property of having nonnegative Ricci curvature passes to the limit.
By construction, $g_\infty$ has curvature norm one at $(m_\infty, 0)$.
Hence $g_\infty$ has positive scalar curvature at $(m_\infty, 0)$. By the
strong maximum principle, it follows that $g_\infty$ has positive scalar
curvature everywhere.

Given $m^\prime \in M_\infty$,
the point
$(m^\prime, 0)$ is the limit of a sequence of points
$\{(m_i^\prime, 0)\}_{i=1}^\infty$ with
$\lim_{i \rightarrow \infty} 
R_{g_i}( m_i^\prime, 0) = R_{g_\infty}(m^\prime, 0) > 0$.
As $\lim_{i \rightarrow \infty} Q_i = \infty$, after undoing the rescaling 
it follows that $\lim_{i \rightarrow \infty} 
R_{g}( m_i^\prime, t_i) = \infty$. As
$\lim_{i \rightarrow \infty} t_i = T$, we also have
$\lim_{i \rightarrow \infty} 
t_i R_{g}( m_i^\prime, t_i) = \infty$.
Applying 
(\ref{2.3}) to $g_i$ and taking the limit as $i \rightarrow \infty$,
it follows that the metric $g_\infty(0)$ satisfies
  $\Ric - \frac13 R g_\infty(0) = 0$. As $g_\infty(0)$ has
  positive scalar curvature at $(m_\infty, 0)$, it follows that
  $M_\infty$ is a spherical space form. Then $M$ is compact,
  which is a contradiction.

    Even if there is no uniform positive lower 
 bound for $Q_i \inj_{g(t_i)}(m_i)^2$,
    after passing to
    a subsequence, there is a pointed limit
    \begin{equation} \label{2.11}
    \lim_{i \rightarrow \infty} (M, g_i(\cdot), m_i) = 
    ({\mathcal G}_\infty, g_\infty(\cdot), {\mathcal O}_{x_\infty}).
    \end{equation}
    Here ${\mathcal G}_\infty$ is a three dimensional closed
    Hausdorff \'etale groupoid and $g_\infty(\cdot)$ is a family of invariant
    Riemannian metrics on the unit space of ${\mathcal G}_\infty$
    \cite[Section 5]{Lott (2007)}.
    Let $X_\infty$ denote
    he orbit space of ${\mathcal G}_\infty$; then
    ${\mathcal O}_{x_\infty} \in X_\infty$ is a basepoint.
    The Ricci flow
    $g_\infty(u)$ is defined for $u \in (- \infty, 0]$.
  For each $u$, the metric $g_\infty(u)$ induces a
  metric on $X_\infty$ that makes it into
  a complete metric space.
  As before, $\lim_{i \rightarrow \infty} R_g(m_i, t_i) = \infty$ and
 (\ref{2.3}) again implies that the metric $g_\infty(0)$ satisfies
  $\Ric - \frac13 R g_\infty(0) = 0$. As $g_\infty(0)$ has
  positive scalar curvature along the orbit ${\mathcal O}_{x_\infty}$
  in the unit space, the metric
  $g_\infty(0)$ has constant positive Ricci curvature.
  The argument for the Bonnet-Myers theorem implies that
  $X_\infty$ is compact; c.f. \cite[Section 2.9]{Hilaire (2014)}.
  Then $M$ is compact,
  which is a contradiction.
\end{proof}

\begin{remark} \label{2.12}
  One could avoid the use of \'etale groupoids by first looking at the
  pullback flows on $T_{m_i} M$ and taking a limit, to argue that
    for large $i$, the metric
    $g(t_i)$ has almost constant positive sectional curvature on
    $B \left( m_i, R(m_i, t_i)^{- \: \frac12} \right)$. One could then
    shift basepoints and repeat the argument, to obtain that for any
    $A < \infty$ and for large $i$, the metric
    $g(t_i)$ has almost constant positive sectional curvature on
    $B \left( m_i, A R(m_i, t_i)^{- \: \frac12} \right)$. From
    Bonnet-Myers, one concludes that $M$ is compact, which is a
    contradiction.
  \end{remark}

\begin{proposition} \label{2.13}
    There is some $C < \infty$ so that for all $t \in [0, \infty)$, we have
    $\|\Rm(g(t)) \|_\infty \le \frac{C}{t}$.
    \end{proposition}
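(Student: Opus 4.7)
The plan is to argue by contradiction, paralleling the second half of the proof of Proposition \ref{2.1}. Suppose the bound fails. Since $(M, g(\cdot))$ has bounded curvature on every compact time interval, we must have $t_i \rightarrow \infty$ and points $m_i \in M$ with $t_i |\Rm(m_i, t_i)| \rightarrow \infty$. By Hamilton's space-time point-picking I may modify the sequence so that in addition $Q_i := |\Rm(m_i, t_i)|$ is near-maximal, $|\Rm(m, s)| \le 2 Q_i$ on $M \times [0, t_i]$, while retaining $Q_i t_i \rightarrow \infty$. Setting $g_i(x, u) = Q_i \, g(x, t_i + Q_i^{-1} u)$ gives a sequence of Ricci flows with $|\Rm_{g_i}| \le 2$ on $M \times [-Q_i t_i, 0]$ and $|\Rm_{g_i}|(m_i, 0) = 1$.

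Extract a pointed Cheeger-Hamilton limit in the \'etale groupoid category, as in the second half of the proof of Proposition \ref{2.1}:
\begin{equation*}
\lim_{i \rightarrow \infty} (M, g_i(\cdot), m_i) = (\mathcal{G}_\infty, g_\infty(\cdot), \mathcal{O}_{x_\infty}),
\end{equation*}
where $g_\infty(u)$ is an ancient Ricci flow on $(-\infty, 0]$ with nonnegative Ricci curvature and $|\Rm_{g_\infty}|(m_\infty, 0) = 1$. A short scaling computation shows that (\ref{2.3}) applied to $g$ at time $t_i$ becomes, for the rescaled flow at $u=0$,
\begin{equation*}
R_{g_i}^{\sigma - 2} \left| \Ric_{g_i} - \tfrac{1}{3} R_{g_i} g_i \right|^2 \le \left( \tfrac{3}{2 Q_i t_i} \right)^\sigma,
\end{equation*}
whose right-hand side tends to zero. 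Passing to the limit forces $\Ric_{g_\infty(0)} = \tfrac{1}{3} R_{g_\infty(0)} g_\infty(0)$ wherever the scalar curvature of $g_\infty(0)$ is positive; this set contains $m_\infty$, and the strong maximum principle propagates it to the whole unit space, just as in Proposition \ref{2.1}. Hence $g_\infty(0)$ is Einstein with positive scalar curvature, which in dimension three means constant positive sectional curvature, and the Bonnet-Myers argument on the orbit space $X_\infty$ (cf.\ \cite[Section 2.9]{Hilaire (2014)}) makes $X_\infty$ compact. This forces $M$ to be compact for large $i$, contradicting the standing noncompactness assumption.

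The only real obstacle, as in Proposition \ref{2.1}, is that there is no a priori injectivity radius lower bound for the rescaled solutions, so collapsing must be allowed in the limit. The \'etale groupoid formalism of \cite[Section 5]{Lott (2007)} handles this: the pointwise pinching identity (\ref{2.3}) is local and passes through the limit, and the Bonnet-Myers conclusion remains available on the orbit space. Setting up the point-picking so that simultaneously $Q_i \rightarrow \infty$, $Q_i t_i \rightarrow \infty$, and the parabolic region on which $|\Rm| \le 2 Q_i$ expands to exhaust $M \times (-\infty, 0]$ in rescaled coordinates is the standard Type-II variant of Hamilton's procedure.
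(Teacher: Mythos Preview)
Your approach is essentially the same as the paper's: assume the bound fails, do a type-II rescaling, pass to a (possibly collapsed, hence \'etale-groupoid) limit, use (\ref{2.3}) together with $Q_i t_i \to \infty$ to force the time-zero slice to be Einstein with positive scalar curvature, and derive compactness via Bonnet--Myers.

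There is one imprecision worth flagging. The point-picking you state first---``$|\Rm(m,s)| \le 2Q_i$ on $M\times[0,t_i]$ while retaining $Q_it_i\to\infty$''---need not be achievable. If, say, $\|\Rm(g(0))\|_\infty$ is large while $\|\Rm(g(t))\|_\infty \sim t^{-1/2}$ for $t\gg 1$, then any $(m_i,t_i)$ with $Q_it_i\to\infty$ forces $Q_i\to 0$, so the bound $|\Rm|\le 2Q_i$ fails near $t=0$. What the genuine type-II procedure (which you correctly invoke at the end, and which the paper uses via \cite[Chapter 8, Section 2.1.3]{Chow-Lu-Ni (2006)}) delivers is a curvature bound on an interval $[a_i,b_i]\ni t_i$ with $Q_i(t_i-a_i)\to\infty$ and $Q_i(b_i-t_i)\to\infty$; that is all you need for the ancient limit and for (\ref{2.3}) to give zero. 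Relatedly, your final remark that ``$Q_i\to\infty$'' is neither guaranteed nor needed: only $Q_it_i\to\infty$ is relevant, and that is exactly what drives $\left(\tfrac{3}{2Q_it_i}\right)^\sigma\to 0$.

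With the point-picking stated correctly as the type-II variant on $[a_i,b_i]$, your argument is complete and matches the paper.
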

\begin{proof}
  Suppose that the proposition is not true. After doing a type-II
  point picking
  \cite[Chapter 8, Section 2.1.3]{Chow-Lu-Ni (2006)}, there are
  points $(m_i, t_i)$ so that $\lim_{i \rightarrow \infty}
  t_i |\Rm(m_i, t_i)| = \infty$ and $|\Rm| \le 2 |\Rm(m_i, t_i)|$
  on $M \times [a_i, b_i]$, with $\lim_{i \rightarrow \infty}
  |\Rm(m_i, t_i)| (t_i - a_i) = \lim_{i \rightarrow \infty}
  |\Rm(m_i, t_i)| (b_i - t_i) = \infty$.
    Put $Q_i = |\Rm(m_i, t_i)|$ and
    $g_i(x,u) = Q_i g(x, t_i + Q_i^{-1} u)$.

  Suppose first that for
  some $i_0 > 0$ and all $i$, we have
  $Q_i \inj_{g(t_i)}(m_i)^2 \ge i_0$. After passing to a subsequence,
we get a limiting Ricci flow solution
$\lim_{i \rightarrow \infty} \left( M, g_i(\cdot), m_i \right) =
\left( M_\infty, g_\infty(\cdot), m_\infty \right)$ defined for
times $u \in \R$. Here $M_\infty$ is a $3$-manifold and 
$|\Rm(m_\infty, 0)| = 1$. As in the proof of Proposition \ref{2.1},
for each $m^\prime \in M_\infty$, the point
$(m^\prime, 0)$ is the limit of a sequence of points $(m_i^\prime, 0)$
with $\lim_{i \rightarrow \infty} t_i R_{g}(m_i^\prime, t_i) = \infty$,
where the latter statement now comes from the type-II rescaling.
From (\ref{2.3}), we get
  $\Ric - \frac13 R g_\infty = 0$. Then $(M_\infty, g_\infty)$
  has constant positive curvature time slices, which implies that
  $M_\infty$ is compact.  Then $M$ is also compact, which is a
  contradiction.

  If $\liminf_{i \rightarrow \infty} Q_i \inj_{g(t_i)}(m_i)^2 = 0$,
  we can still take a limit as in (\ref{2.11}). As in the argument after
  (\ref{2.11}), we again conclude that $M$ is compact, which is a
  contradiction.
    \end{proof}

\begin{corollary} \label{2.14}
  There are numbers $\{A_k\}_{k=0}^\infty$
  that for all $t \in [0, \infty)$ and all multi-indices $I$, we have
    $\| \nabla^I \Rm \|_{g(t)} \le A_{|I|} t^{- \frac{|I|}{2} - 1}$.
\end{corollary}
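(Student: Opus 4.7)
The plan is to deduce Corollary \ref{2.14} from Proposition \ref{2.13} by a standard parabolic rescaling argument combined with Shi's local derivative estimates, so the heavy lifting has already been done.

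First I would fix $t > 0$ and perform the parabolic rescaling $\tilde{g}(s) = t^{-1} g(ts)$ for $s \in [1/2, 1]$. This is again a Ricci flow solution with complete time slices, and its curvature norm satisfies
\begin{equation*}
|\Rm(\tilde{g}(s))|_{\tilde{g}(s)} = t \cdot |\Rm(g(ts))|_{g(ts)} \le t \cdot \frac{C}{ts} = \frac{C}{s} \le 2C
\end{equation*}
on $s \in [1/2, 1]$, by Proposition \ref{2.13}. So on a time interval of fixed length $1/2$ ending at $s = 1$, the rescaled flow has a uniform curvature bound depending only on $C$, not on $t$.

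Next I would invoke Shi's derivative estimates (applied, say, on the backward interval $[1/2, 1]$ for $\tilde{g}$): there exist constants $B_k = B_k(C, k)$ such that
\begin{equation*}
|\tilde{\nabla}^I \Rm(\tilde{g}(1))|_{\tilde{g}(1)} \le B_{|I|}
\end{equation*}
for every multi-index $I$. Because the ambient manifold is complete with uniformly bounded curvature on the interval in question, the standard global form of Shi's estimates (as stated, e.g., in Hamilton's and Shi's original papers, or in Chow--Lu--Ni) applies without difficulty.

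Finally I would unwind the rescaling. Under $\tilde{g} = t^{-1} g$, the $(0, 4)$-tensor $\nabla^I \Rm$ scales as $t^{-1}$ at the level of the tensor itself, while the pointwise norm picks up an additional factor of $t^{|I|/2 + 2}$ from the inverse metric contractions; combining, $|\tilde{\nabla}^I \Rm|_{\tilde{g}} = t^{1 + |I|/2} |\nabla^I \Rm|_g$. Evaluated at $s = 1$ (which corresponds to time $t$ in the original flow), this gives
\begin{equation*}
|\nabla^I \Rm(g(t))|_{g(t)} \le B_{|I|} \, t^{-\frac{|I|}{2} - 1},
\end{equation*}
so setting $A_k = B_k$ yields the corollary. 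There is no real obstacle here; the only point requiring minor care is to choose the rescaling so that the time interval on which the rescaled curvature bound holds has a definite, $t$-independent length, which is why I use the interval $[1/2, 1]$ in $s$-time rather than, say, $(0, 1]$.
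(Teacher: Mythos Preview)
Your proposal is correct and is precisely the standard argument underlying the paper's one-line proof, which simply invokes Proposition \ref{2.13} together with the Ricci flow derivative estimates from \cite[Theorem 6.9]{Chow-Lu-Ni (2006)}. The parabolic rescaling you spell out is exactly how those estimates are applied to convert the type-III curvature bound into the stated derivative bounds, so your approach and the paper's coincide.
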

\begin{proof}
  This follows from Proposition \ref{2.13}, along with derivative estimates for
the Ricci flow \cite[Theorem 6.9]{Chow-Lu-Ni (2006)}.
  \end{proof}

Let $d_t : M \times M \rightarrow \R$ be the distance function on $M$
with respect to the Riemannian metric $g(t)$. In particular, $d_0$ is the
distance function with respect to $g_0$.

\begin{lemma} \label{2.15}
  There is some $C^\prime < \infty$ so that whenever
  $0 \le t_1 \le t_2 < \infty$, we have
  \begin{equation} \label{2.16}
    d_{t_1} - C^\prime \left( \sqrt{t_2} - \sqrt{t_1} \right)
      \le d_{t_2} \le d_{t_1}.
    \end{equation}
  \end{lemma}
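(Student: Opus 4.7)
The plan is to prove the two inequalities in (\ref{2.16}) separately, using the decaying curvature estimate from Proposition \ref{2.13}.

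The upper bound $d_{t_2} \le d_{t_1}$ is immediate: since nonnegativity of Ricci curvature is preserved by the flow, one has $\partial_t g = -2\Ric \le 0$ on $M \times [0,\infty)$, so $g(t)$ is non-increasing in $t$ as a $(0,2)$-tensor, and hence lengths of all smooth curves, and therefore distances, are non-increasing in $t$.

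For the lower bound, fix distinct points $p, q \in M$ and write $D(t) := d_t(p,q)$. From Proposition \ref{2.13} there is $C_1 < \infty$, depending only on $C$, with $|\Ric(g(t))| \le C_1/t$ for all $t > 0$. The plan is to split $[t_1, t_2]$ at the (at most one) time $t^*$ at which $D(t)$ crosses the threshold $2\sqrt{t}$ from above; such a crossing is unique because $D$ is non-increasing while $2\sqrt{t}$ is strictly increasing. On the ``long-distance'' subinterval where $D \ge 2\sqrt{t}$, apply Perelman's distance distortion lemma with radius $r_0 = \sqrt{t}$ on the balls $B_t(p, r_0)$ and $B_t(q, r_0)$ (the Ricci hypothesis is global, so certainly holds there); in the forward-difference-quotient sense this yields $\frac{d^+}{dt} D(t) \ge -C_2/\sqrt{t}$ for an explicit constant $C_2$, and integrating produces an additive $\sqrt{\cdot}$-type bound with constant $2C_2$. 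On the ``short-distance'' subinterval where $D < 2\sqrt{t}$, apply the first variation of arclength to the minimizing geodesic $\gamma_t$ at the right-endpoint time $t$, viewed as a fixed curve whose length $L(\gamma_t, g(s))$ under $g(s)$ satisfies $\partial_s L \ge -(C_1/s) L$; integrating in $s$ from $t^*$ to $t$ produces the multiplicative bound $D(t) \ge D(t^*) (t^*/t)^{C_1}$. Combined with $D(t^*) \le 2\sqrt{t^*}$ and the elementary inequalities $1 - e^{-x} \le x$ and $\log x \le x - 1$, this multiplicative bound converts to the additive form $D(t) \ge D(t^*) - 4 C_1 (\sqrt{t} - \sqrt{t^*})$.

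The two bounds patch additively across the splitting time $t^*$ to yield (\ref{2.16}) with $C' = \max(2C_2, 4C_1)$. The boundary case $t_1 = 0$ follows by letting $t_1 \to 0^+$ and invoking continuity of $g(t)$ at $t = 0^+$, so that $d_{t_1}(p, q) \to d_0(p, q)$. The main technical ingredient is Perelman's distance distortion lemma, whose application here is standard once the choice $r_0 = \sqrt{t}$ is made; the short-distance conversion from multiplicative to additive via the elementary logarithmic estimates is then routine, as is the interval-splitting argument.
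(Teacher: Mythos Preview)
Your proof is correct and follows essentially the same route as the paper, which simply cites the distance distortion estimates in \cite[Remark 27.5 and Corollary 27.16]{Kleiner-Lott (2008)}; you have unpacked the content of those citations. One remark: your short-distance treatment is more elaborate than necessary. Once $D(t) < 2\sqrt{t}$, taking a minimizing geodesic $\gamma_s$ at each time $s$ gives directly (in the forward-difference sense)
\[
\frac{d^+}{ds} D(s) \ge -\int_{\gamma_s} \Ric(\gamma_s',\gamma_s') \ge -\frac{C_1}{s}\,D(s) > -\frac{2C_1}{\sqrt{s}},
\]
so the same $-\text{const}/\sqrt{s}$ bound holds in both regimes and one can integrate in one stroke without the multiplicative-to-additive conversion. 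Your version via the fixed curve $\gamma_t$ and the logarithmic inequalities is nonetheless valid.
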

\begin{proof}
  This follows from distance distortion estimates for Ricci flow, as
  in \cite[Remark 27.5 and Corollary 27.16]{Kleiner-Lott (2008)}.
\end{proof}

Fix $m_0 \in M$. Given $s > 0$, put $g_s(u) = s^{-1} g(su)$. Then
$(M, g_s(\cdot))$ is also a Ricci flow solution, with
$\parallel \Rm(g_s(u)) \parallel \le \frac{C}{u}$ and
$\| \nabla^I \Rm \|_{g_s(u)} \le A_{|I|} u^{- \frac{|I|}{2} - 1}$.
Its distance function at time $u$ is
$\widehat{d}_{s,u} = s^{- \: \frac12} d_{su}$. From (\ref{2.16}), we have
\begin{equation} \label{2.17}
  \frac{1}{\sqrt{s}} d_0 - C^\prime \sqrt{u} \le \widehat{d}_{s,u} \le
  \frac{1}{\sqrt{s}} d_0.
  \end{equation}
Given $\rho > 0$, it follows that
\begin{equation} \label{2.18}
  B_{\widehat{d}_{s,u}}(m_0, \rho - C^\prime \sqrt{u})
  \subset B_{d_0}(m_0, \rho \sqrt{s}) \subset
  B_{\widehat{d}_{s,u}}(m_0, \rho)
  \end{equation}

Also, if
$0 \le s_1 \le s_2 < \infty$ then
  \begin{equation} \label{2.19}
    \sqrt{\frac{s_1}{s_2}} \widehat{d}_{s_1,u} - C^\prime
    \left( 1 - \sqrt{\frac{s_1}{s_2}}
    \right) \sqrt{u}
   \le \widehat{d}_{s_2,u} \le \sqrt{\frac{s_1}{s_2}} \widehat{d}_{s_1,u}.
    \end{equation}
Given $\rho > 0$, it follows that
\begin{equation} \label{2.20}
B_{\widehat{d}_{s_{2},u}} \left( m_0,
\sqrt{\frac{s_1}{s_2}}
\rho - C^\prime \left( 1 - \sqrt{\frac{s_1}{s_2}}
\right) \sqrt{u} \right) \subset
B_{\widehat{d}_{s_{1},u}} \left( m_0,
\rho \right)
   \subset B_{\widehat{d}_{s_{2},u}} \left( m_0, \sqrt{\frac{s_1}{s_2}}
\rho \right).
\end{equation}

  Given a sequence
  $\{s_i\}_{i=1}^\infty$ tending to infinity and $u > 0$,
  after passing to a subsequence
  we can assume that there is a limit
  of $\lim_{i \rightarrow \infty} (M, g_{s_i}(u), m_0)$
  in the pointed Gromov-Hausdorff
  topology. We claim that we can choose the subsequence so that
  the limit exists simultaneously for each $u$,
  and as $u$ varies the limiting metric
  spaces are all biLipschitz equivalent to each other.  To see this,
after passing to a subsequence we can assume that there is a limit  
$\lim_{i \rightarrow \infty} (M, g_{s_i}(\cdot), m_0) =
({\mathcal G}_\infty, g_\infty(\cdot), {\mathcal O}_{x_\infty})$.
Here $g_\infty(\cdot)$ is a Ricci flow solution on the \'etale
groupoid ${\mathcal G}_\infty$, that exists for $u > 0$.
As $u$ varies,
the pointed Gromov-Hausdorff limit
$\lim_{i \rightarrow \infty} (M, g_{s_i}(u), m_0)$ always has the
same underlying pointed topological space, namely the
pointed orbit space
$(X_\infty, x_\infty)$ of
${\mathcal G}_\infty$.  The metric on the limit depends on $u$, and
is the quotient metric
$\widehat{d}_{\infty,u}$ coming from $g_\infty(u)$. It follows that
the various quotient metrics, as $u$ varies, are biLipschitz to each
other.

Since $M$ is noncompact, $X_\infty$ is also noncompact. In particular,
$\dim(X_\infty) > 0$.

\section{Noncollapsing at large time} \label{sect3}

In this section we show that the Ricci flow solution
from Section \ref{sect2} is noncollapsed for large time, in a scale-invariant
sense.  More precisely, we show that there is a blowdown limit on
a three dimensional manifold, where the emphasis is on the three
dimensionality.

We recall that the Ricci flow solution from Section \ref{sect2} has positive
Ricci curvature and lives on a noncompact manifold, which is necessarily
then diffeomorphic to $\R^3$. After passing to a subsequence, we can
extract a blowdown limit $X_\infty$ (corresponding to a fixed rescaled time)
in the sense of pointed Gromov-Hausdorff convergence. The issue is to
show that $\dim(X_\infty) = 3$. Since $X_\infty$ is noncompact, we must
exclude that $\dim(X_\infty)$ is one or two.
We note that $\R^3$ 
can collapse with bounded
sectional curvature \cite[Example 1.4]{Cheeger-Gromov (1985)} due to a
graph manifold structure, so the result is not immediate. 

The following statement is the main result of this section.

\begin{proposition} \label{3.1}
  There is some sequence $\{s_i\}_{i=1}^\infty$ tending to infinity
  so that the pointed limit
  $\lim_{i \rightarrow \infty} (M, g_{s_i}(\cdot), m_0)$ exists as a
  Ricci flow $(M_\infty, g_\infty(\cdot), m_\infty)$ on a
  pointed $3$-manifold $(M_\infty, m_\infty)$. 
  \end{proposition}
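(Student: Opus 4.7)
The plan is an argument by contradiction, continuing the \'etale groupoid framework of Section~\ref{sect2}. After passing to a subsequence, the Cheeger--Hamilton compactness for bounded-curvature Ricci flows on \'etale groupoids (used already in Propositions \ref{2.1} and \ref{2.13}) yields a pointed limit
\[
\lim_{i\to\infty} (M, g_{s_i}(\cdot), m_0) = (\mathcal{G}_\infty, g_\infty(\cdot), \mathcal{O}_{x_\infty}),
\]
where $\mathcal{G}_\infty$ is a three-dimensional closed Hausdorff \'etale groupoid carrying a Ricci flow defined for $u>0$, and $X_\infty$ denotes its orbit space. Proposition \ref{3.1} is equivalent to the assertion $\dim X_\infty = 3$, in which case $\mathcal{G}_\infty$ reduces to the pair groupoid of a $3$-manifold $(M_\infty, m_\infty)$. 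Because $M$ is noncompact, so is $X_\infty$, and so $\dim X_\infty \ge 1$; the task reduces to eliminating the cases $\dim X_\infty = 1$ and $\dim X_\infty = 2$, which will be handled separately in Subsections \ref{subsect3.1} and \ref{subsect3.2}.

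Assume $\dim X_\infty < 3$. Then $(M, g_{s_i}(1))$ is collapsing on precompact subsets with uniformly bounded curvature (Proposition~\ref{2.13} applied to the rescaled flows). The Cheeger--Fukaya--Gromov structure theory of collapse with bounded sectional curvature then supplies, for large $i$, a nilpotent Killing / F-structure on arbitrarily large metric neighborhoods of $m_0$ in $(M, g_{s_i}(1))$, whose orbits have dimension $3 - \dim X_\infty$. The limit $X_\infty$ is Alexandrov and inherits the $c$-Ricci pinched nonnegative condition in a suitable generalized sense, since pinching is preserved by Ricci flow and by parabolic rescaling. Next I would propagate this structure back to the initial time $t=0$. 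From the distance-distortion estimate (\ref{2.17}), the rescaled initial distance $s_i^{-1/2} d_0$ differs from $\widehat d_{s_i,1}$ by at most the additive constant $C'$ everywhere, so a ball of unit radius in $(M, g_{s_i}(1))$ corresponds, up to bounded additive distortion, to a ball of radius $\sqrt{s_i}$ in $(M, g_0)$. Consequently the collapse on unit scale at rescaled time $1$ pulls back to collapse of $(M, g_0)$ on scale $\sqrt{s_i} \to \infty$, producing a coarse F-structure at infinity on the original manifold $(M, g_0) \cong \R^3$.

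The main obstacle is the final contradiction, since $\R^3$ does admit bounded-curvature collapsing via graph-manifold F-structures (cf.\ \cite[Example 1.4]{Cheeger-Gromov (1985)}), and so the pinching must be used essentially to exclude such a structure. In the case $\dim X_\infty = 2$, I would argue that the base $X_\infty$ inherits generalized positive Ricci curvature from the $c$-pinching of the collapsing flow, and then apply a Bonnet--Myers-type argument in the \'etale groupoid setting, as in the final paragraph of the proof of Proposition~\ref{2.1} (compare \cite[Section 2.9]{Hilaire (2014)}), to force $X_\infty$ to be compact, contradicting its noncompactness. In the case $\dim X_\infty = 1$, the two-dimensional collapsed fibers of the F-structure should, by the pinching, approximate nearly round $2$-spheres or $\R P^2$'s, exhibiting $(M, g_0) \cong \R^3$ at infinity as a coarse $S^2$- or $\R P^2$-fibration over a half-line; this is incompatible with $\pi_2(\R^3) = 0$ and the contractibility of $\R^3$.
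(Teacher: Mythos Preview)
Your overall setup is fine, but both of your proposed case eliminations contain genuine gaps, and in fact the paper's proof of Proposition~\ref{3.1} does not use the $c$-pinching hypothesis at all; it is a purely topological argument exploiting $M \cong \R^3$.

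\textbf{The two-dimensional case.} Your claim that $X_\infty$ ``inherits generalized positive Ricci curvature from the $c$-pinching'' is not justified. Under bounded-curvature collapse along circles, the base orbifold inherits an Alexandrov lower curvature bound from the \emph{sectional} curvature of $M$, but the Ricci pinching of the total space does not in any known way produce a positive lower Ricci bound on the quotient (the O'Neill formulas go the wrong direction for this). Consequently the Bonnet--Myers step fails. The paper instead shows (Lemmas~\ref{3.5} and~\ref{3.7}) that the circle fibrations over successive rescalings can be made compatible on overlaps, so that letting $s \to \infty$ produces a global Seifert fibration of $\R^3$, which is impossible by \cite{Vogt (1989)}.

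\textbf{The one-dimensional case.} Your assertion that the two-dimensional collapsed fibers ``approximate nearly round $2$-spheres or $\R P^2$'s'' is incorrect: in Cheeger--Fukaya--Gromov collapse the fibers are infranilmanifolds, so two-dimensional fibers are tori or Klein bottles (see Lemma~\ref{3.3}), never spheres. The pinching cannot change this topological constraint. The paper's argument is entirely different: it picks out a fiber $\mathcal{C}$ (a circle or $T^2$) that is $\pi_1$-nontrivial in a large ball at scale $s_0$, tracks the infimal radius $\mu(s)$ at which $\mathcal{C}$ becomes null-homotopic as $s$ increases, and at the transition scale $s_1$ where $\mu(s_1)=1$ shows the limit must be two-dimensional; the resulting Seifert structure (Lemma~\ref{1dlem} and the exact sequence (\ref{exact})) then forces $\mathcal{C}$ to remain $\pi_1$-nontrivial in too large a ball, a contradiction.
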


  Before giving the details of the proof, we sketch the main ideas.
  Suppose that the proposition is not true.  Fixing $u$, for large $s_0$
  the metric space $(M, {\widehat{d}}_{s_0,u}, m_0)$ is pointed
  Gromov-Hausdorff close to a noncompact Alexandrov space of dimension
  one or two.  In either case, there is a short loop
  $\gamma$ at $m_0$ that is
  not contractible in ${B_{\widehat{d}_{s_0,u}}(m_0, 1)}$. Since $M$ is
  contractible, $\gamma$ can be contracted in
  ${B_{\widehat{d}_{s_0,u}}(m_0, \Delta)}$ for some $\Delta > 1$.
  From the distance shrinking in (\ref{2.19}), there will be some $s_1 \ge s_0$ so that
$\gamma$ cannot be contracted in
${B_{\widehat{d}_{s_1,u}}(m_0, 1)}$ but can be contracted in
${B_{\widehat{d}_{s,u}}(m_0, 1)}$ for all $s > s_1$.
From continuity and the
definition of $s_1$, the loop $\gamma$ can
be contracted in 
${B_{\widehat{d}_{s_1,u}}(m_0, 2)}$ but cannot be contracted in
${B_{\widehat{d}_{s_1,u}}(m_0, 1/10)}$.
When changing metrics from
$\widehat{d}_{s_0,u}$ to $\widehat{d}_{s_1,u}$, the length of $\gamma$
can only go down.  Then $(M, {\widehat{d}}_{s_1,u}, m_0)$ will be
pointed Gromov-Hausdorff close to a ray, with $m_0$
corresponding to a point of distance approximately one from the tip of the
ray. The part of $M$ in which $\gamma$ contracts is a very
collapsed solid torus. From the geometry of such solid tori,
if a very short loop at $m_0$ is
contractible in ${B_{\widehat{d}_{s_1,u}}(m_0, 2)}$ then it is
already contractible in
${B_{\widehat{d}_{s_1,u}}(m_0, 1/10)}$, which is a contradiction.

In what follows, $\const$ will denote a constant that is independent of the other parameters in the statement. 
Given a sequence $\{K_i\}_{i=0}^\infty$, we will say that a Riemannian manifold is
$\vec{K}$-regular if $\| \nabla^I \Rm \| \le K_{|I|}$ for all multi-indices $I$. For
simplicity, we will take $\| \Rm \| \le K_0$ to mean that the sectional curvatures are
bounded by $K_0$ in magnitude.
We will say that a loop $\gamma$ at a basepoint $m_0$ is contractible in a ball
$B(m_0, r)$ if it can be
contracted to a point by a family of loops in $B(m_0, r)$ that go through $m_0$, i.e. that
$\gamma$ represents a trivial element of $\pi_1(B(m_0, r), m_0)$.
We begin by proving some lemmas about collapsed $3$-manifolds.
The first one is relevant when the manifold is pointed Gromov-Hausdorff close to a
one dimensional space. (See Figure
\ref{figure1}.)
\begin{figure}[ht!]
\includegraphics[width=6in]{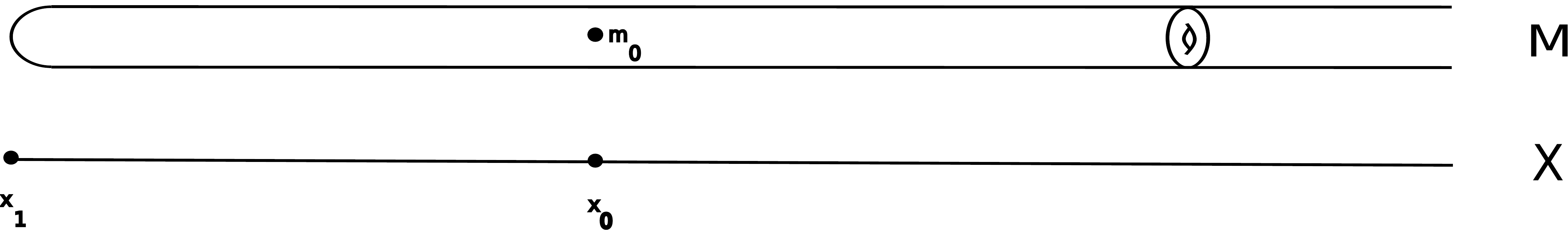}
\caption{}
\label{figure1}
\end{figure}

\begin{lemma} \label{gl1}
  Given $\vec{K}$ and $L < \infty$, there is some $\overline{\delta_1} = \overline{\delta_1}(\vec{K},L) > 0$ with the following
  property.  Suppose that $(M, m_0)$ is a complete pointed $\vec{K}$-regular oriented
  Riemannian
  $3$-manifold
  so that
  $(M, m_0)$ has pointed Gromov-Hausdorff distance
  $\delta_1 \le \overline{\delta_1}$
  from a line or ray, which we denote by $(X, x_0)$.
  Then  there is a loop $\gamma$ through $m_0$ with length
  $l(\gamma) \le \const_1 \delta_1$ so that $\gamma$ is not contractible in
  ${B(m_0, L)}$.
\end{lemma}
\begin{proof}
It is enough to consider ${\delta_1}$'s that are much smaller than $L^{-1}$.
We apply \cite[Theorem 1.7]{Cheeger-Fukaya-Gromov (1992)}.
In our case, the relevant nilpotent Lie groups $N$ to describe the
local geometry near a point $m \in M$, from
\cite[p. 331]{Cheeger-Fukaya-Gromov (1992)},
are $\R^2$ if $X$ is a line and $\R \times U(1)$ if $X$ is a ray.

If $X$ is a line and $\delta_1$ is 
sufficiently small then the pointed Gromov-Hausdorff approximation can be
realized by a map that restricts to a fibration
$\pi : U \rightarrow ( - \delta_1^{-1}/2, \delta_1^{-1}/2)$, where $U$ is an open subset of
$M$, $\pi(m_0) = 0 = x_0$,
the fibers of $\pi$ have diameter at most $\const \delta_1$, and the fibers are $2$-tori
or Klein bottles.  Since an oriented $3$-manifold cannot contain a two-sided Klein bottle, the fibers must be $2$-tori.
Up to a small biLipschitz perturbation, we can assume that the fibers are flat
\cite[Theorem 1.3]{Cheeger-Fukaya-Gromov (1992)}.  We can then take
$\gamma$ to be a shortest nontrivial closed geodesic in $\pi^{-1}(x_0)$ passing through
$m_0$.

If $X$ is a ray, let $x_1$ denote its tip.  If $d(x_0, x_1) > 2L$ then we are effectively in
the previous case.  If $d(x_0, x_1) \le 2L$ then
there is a singular fibration $\pi : U \rightarrow B(x_0, \delta_1^{-1}/2)$, where $U$ is an open subset of
$M$ and the preimages of $\pi$ have diameter at most $\const \delta_1$. If $x \neq x_1$ then
$\pi^{-1}(x)$ is a $2$-torus, while $\pi^{-1}(x_1)$ is a circle. 
From \cite[Theorem 1.7]{Cheeger-Fukaya-Gromov (1992)}, a local biLipschitz
model around 
$\pi^{-1}(x_1)$ is $(\R \times B^2)/\Lambda$, where $\R \times B^2$ has a metric that is
$\R \times U(1)$-invariant and $\Lambda$ is a discrete subgroup of $\R \times U(1)$ that
is isomorphic to $\Z$. More precisely, for any given $\epsilon > 0$, there are $\rho > 0$ and
$k \in \Z^+$ so that there is a $(\rho, k)$-round model
metric, in the sense of \cite[Section 1]{Cheeger-Fukaya-Gromov (1992)}, that is $e^{\epsilon}$-biLipschitz 
close to the geometry near $\pi^{-1}(x_1)$; in our case the model metric takes the form
$(\R \times B^2)/\Lambda$.
To be close to a ray,
a generator $(\tau, e^{i \phi})$ of $\Lambda$ has $\phi$ a small nonzero number and 
$\tau$ very small,
relative to $\phi$. Topologically, $(\R \times B^2)/\Lambda$ is a solid torus, as can
be seen by isotoping $\phi$ to zero.  Away from this model around $\pi^{-1}(x_1)$, there is a topological product structure of an interval with a $2$-torus, so 
$\pi^{-1}(B(x_0, 3L))$ is a solid torus.

As before, up to a small biLipschitz perturbation, we can assume that the generic preimage
of $\pi$ is a flat $2$-torus.  If $x_0 = x_1$ then we can take $\gamma$ to be the circle
$\pi^{-1}(x_0)$.  If $x_0 \neq x_1$, let $\gamma_0$ be a shortest nontrivial closed geodesic
in the $2$-torus $\pi^{-1}(x_0)$ passing through $m_0$ and let $\gamma_1$ be a shortest closed geodesic in $\pi^{-1}(x_0)$ passing through 
$m_0$ that is not homotopic to a multiple of $\gamma_0$. They both have length at most
$\const_1 \delta_1$. At least one of $\gamma_0$ and
$\gamma_1$ is noncontractible in the solid torus $\pi^{-1}(B(x_0, 3L))$ and we take
$\gamma$ to be that curve.
\end{proof}

Given $K, \delta_1 > 0$, let $A_{K, \delta_1}$ be the set of pointed two dimensional complete
Alexandrov spaces $(X,d_X,\star_X)$ with Alexandrov curvature bounded below by $- 2K$
that have pointed Gromov-Hausdorff distance at least $\delta_1/2$ from any pointed one
dimensional complete Alexandrov space. It is compact in the pointed Gromov-Hausdorff
topology.

The next lemma is relevant when the manifold is pointed Gromov-Hausdorff close to a
two dimensional space but is not pointed Gromov-Hausdorff close to a one dimensional
space.

\begin{lemma} \label{gl2}
  Given $L < \infty$, $\delta_1 > 0$ and $\vec{K}$, there is some $\overline{\delta_2} =
  \overline{\delta_2}(\vec{K},L, \delta_1) > 0$ with the following
  property.  Suppose that $(M, m_0)$ is a complete pointed $\vec{K}$-regular
  Riemannian
  $3$-manifold diffeomorphic to $\R^3$
  so that
  $(M, m_0)$ has pointed Gromov-Hausdorff distance
  $\delta_2 \le \overline{\delta_2}$
  from an element $(X, x_0)$ of $A_{K_0,\delta_1}$.
  Then  there is a loop $\gamma$ through $m_0$ with length
  $l(\gamma) \le \const_2 \delta_2$ so that $\gamma$ is not contractible in
  ${B(m_0, L)}$.
\end{lemma}
\begin{proof}
  We can assume that $\delta_2^{-1} \gg L$. We can apply \cite[Theorem 1.7]{Cheeger-Fukaya-Gromov (1992)}, with the relevant nilpotent Lie group $N$ being $\R$. The conclusion is that
  if $\delta_2$ is sufficiently small then the pointed Gromov-Hausdorff
  approximation can be realized by a map that restricts to an orbifold circle fibration 
  $\pi : U \rightarrow B(x_0, \delta_2^{-1}/2)$ where $U$ is an open subset of $M$ and the
  fibers of $\pi$ have diameter at most $\const_2 \delta_2$.  
  (As $A_{K_0, \delta_1}$ is compact, we can choose
  $\delta_2$ independent of $(X, x_0) \in A_{K_0, \delta_1}$.) As $M$ is orientable,
  the orbifold $B(x_0, \delta_2^{-1}/2)$ has isolated singular points, i.e. there are
  no reflector lines.  Since $M$ does not have any embedded Klein bottles, the
  circle fibration is orientable and so describes a Seifert fibration.  As 
  $B(x_0, 2L)$ is noncompact, from
  \cite[Lemma 3.2]{Scott (1983)}
  there is an exact sequence
  \begin{equation} \label{exact}
  1 \rightarrow \Z \rightarrow
  \pi_1
  \left( \pi^{-1}(B(x_0, 2L)) \right)
  \rightarrow
  \pi_1
  \left( B(x_0, 2L) \right) \rightarrow 1,
  \end{equation}
  where the image of a generator of $\Z$ is represented by a regular fiber
  of the Seifert fibration, and
  $\pi_1
  \left( B(x_0, 2L) \right)$ denotes the
  orbifold fundamental group.
  (Since the $\Z$-subgroup is central in $\pi_1(\pi^{-1}(B(x_0, 2L)))$,
  it is well-defined independent of
  basepoint.) If $\delta_2$ is small then
  ${B(m_0, L)} \subset \pi^{-1}(B(x_0, 2L))$ and 
  we can take $\gamma = \pi^{-1}(x_0)$.
\end{proof}

The next lemma describes the geometry of a collapsed space with a short loop at the
basepoint that can be contracted in a ball of radius $2L$ around the basepoint, but
not in a ball of radius $L/10$ around the basepoint. 

\begin{lemma} \label{gl3}
  Given $L < \infty$, $\delta_3 \ll L^{-1}$ and $\vec{K}$, there is some $\delta_4 =
  \delta_4(\vec{K},L, \delta_3) > 0$ with the following
  property.  Suppose that $(M, m_0)$ is a complete noncompact pointed $\vec{K}$-regular
  oriented Riemannian
  $3$-manifold
  so that
  $(M, m_0)$ has pointed Gromov-Hausdorff distance at most
  $\delta_4$ from a one or two dimensional complete pointed Alexandrov space with Alexandrov curvature bounded below by $-2K_0$. Suppose that there is a loop $\gamma$ through $m_0$ of length
  $l(\gamma) < \delta_4$ that can be contracted in ${B(m_0, 2L)}$ but not in 
  ${B(m_0, L/10)}$. Then
   $(M, m_0)$ has pointed Gromov-Hausdorff distance at most $\delta_3$ from a ray $(X, x_0)$.
\end{lemma}
\begin{proof}
Given $\delta_1 > 0$, which we will adjust, as in the proof of Lemma \ref{gl2} if $\delta_4$ is 
sufficiently small and $(M, m_0)$ is $\delta_4$-close to some
$(X, x_0) \in 
A_{K_0, \delta_1}$ then the pointed Gromov-Hausdorff approximation can be
realized by an orbifold circle fibration
$\pi : U \rightarrow B(x_0, \delta_4^{-1})$.  

We claim that
there is an $r = r(\vec{K}, \delta_1) \ll L/100$ so that the pointed Gromov-Hausdorff
approximation can be chosen such that 
$B(x_0, r)$ has at most one singular point.
To see this, suppose by way of contradiction that there are sequences $\{ M_i \}_{i=1}^\infty$ and $\{ \delta_{4,i} \}_{i=1}^\infty$ with $\lim_{i \rightarrow \infty} \delta_{4,i} = 0$
so that $\lim_{i \rightarrow \infty} M_i = X_\infty$ for some 
$(X_\infty, x_{0, \infty}) \in A_{K_0, \delta_1}$, but if $\pi_i : M_i \rightarrow X_i$ is 
an orbifold circle bundle that is a pointed $\delta_{4,i}$-approximation to
an $(X_i, x_{0,i}) \in A_{K_0, \delta_1}$ then $B(x_{0,i}, 1/i)$ has more than one
orbifold point.  After passing to a subsequence, the frame bundles 
$\{ FM_i \}_{i=1}^\infty$ converge in the pointed smooth $\SO(3)$-equivariant topology to a
$5$-manifold ${\mathcal M}$ with a locally free $\SO(3)$-action, and $X_\infty = 
{\mathcal M}/\SO(3)$
\cite[Proposition 11.5 and Theorem 12.8]{Fukaya (1990)}.
As each $M_i$ is orientable, $X_\infty$ has isolated singular points.
For large $i$ there is a pointed $\delta_{4,i}$-approximation
$\pi_i : M_i \rightarrow X_\infty$, which is a contradiction.

Then
$\pi^{-1}(B(x_0, r))$ is a solid torus.  As $\gamma$ is a short
loop in $\pi^{-1}(B(x_0, r))$ that is not contractible in ${B(m_0, 2r)}$, it is homotopic to
a nonzero multiple of a circle fiber of the fibration.  Then from (\ref{exact}), the loop
$\gamma$ is not contractible in ${B(m_0, 2L)}$, which is a
contradiction. Hence $(M, m_0)$ isn't $\delta_4$-close to an element of
$A_{K_0, \delta_1}$. In particular, if $\delta_4$ is small enough relative to $\delta_1$
then
$(M, m_0)$ is $\delta_1$-close to a pointed line or ray $(X, x_0)$.

If $(X,x_0)$ is a line and $\delta_1^{-1}$ is large compared to $L$ then 
$B(m_0, 4L)$ is homeomorphic to a product of an interval with a $2$-torus, with
${B(m_0, L/10)}$ and
${B(m_0, 2L)}$ homeomorphic to a products of subintervals with the $2$-torus. This
contradicts the assumption that $\gamma$ is contractible in ${B(m_0, 2L)}$ but not in ${B(m_0, L/10)}$.  Hence $(X, x_0)$ is a ray, to which
$(M, m_0)$ is $\delta_1$-close. 
If we take $\delta_1 \ll \delta_3$ then the conclusion of the lemma holds.
\end{proof}

The next lemma is a statement about the lengths of curves $\gamma$ in Lemma \ref{gl3}.

\begin{lemma} \label{gl4}
Given $\vec{K}$ and $L < \infty$, there are $\delta_5 = \delta_5(\vec{K},L) > 0$ and ${\mathcal L} = 
{\mathcal L}(\vec{K},L) > 0$ with the following property.  Suppose that $(M, m_0)$ is a
complete pointed orientable $\vec{K}$-regular Riemannian $3$-manifold so that $(M, m_0)$ has pointed Gromov-Hausdorff distance at most
$\delta_5$ from a ray $(X,x_0)$. Suppose that
there is a loop $\gamma$ in $B(m_0, L/100)$, going through $m_0$, that can be contracted in 
${B(m_0, 2L)}$ but not in 
  ${B(m_0, L/10)}$. Then the length of $\gamma$ is at least ${\mathcal L}$.
\end{lemma}
\begin{proof}
We can assume $K_0 > 0$.
For $\delta_5$ small, we first describe the local geometry of $M$.  In what follows, it will be sufficient to
have estimates with respect to a
metric that is $e^\epsilon$-biLipschitz to the metric $g$ on $M$, for some $\epsilon > 0$.
Fixing $\epsilon$, we can approximate the metric on $M$ by a model
$(\rho, k)$-round metric $\widehat{g}$ in the sense of \cite{Cheeger-Fukaya-Gromov (1992)}.
In doing so we may increase the curvature bound by some factor depending on 
$\epsilon$. Although it's not essential for us, we can assume that the sectional curvatures of $\widehat{g}$ are 
bounded above by $2K_0$ in magnitude \cite[Theorem 2.1]{Rong (1996)}.

In the region of $M$ corresponding to the tip $x_1$ of $X$, the relevant nilpotent group is
$N = \R \times U(1)$ and the relevant covering group is $\Lambda = \Z$, a discrete
subgroup of $N$. From \cite[Theorem 1.3]{Cheeger-Fukaya-Gromov (1992)}, there is an open
subset $V$ of $M$ whose $\Lambda$-cover $\widetilde{V}$ carries a model
$N$-invariant metric. The fixed-point set $\widetilde{c}$ of $U(1)$ will be an infinite geodesic in
$\widetilde{V}$ and we can assume that $\widetilde{V}$ is the
$\rho (2K_0)^{- 1/2}$-neighborhood of $\widetilde{c}$, topologically $\R \times B^2$. 
A point on $\widetilde{c}$ has injectivity radius at least $\rho (2K_0)^{- 1/2}$.
The image $c$ of $\widetilde{c}$, in $V$, is a closed geodesic
that can be considered to lie over $x_1$ in a singular fibration
$V \rightarrow B(x_1, \rho (2K_0)^{- 1/2})$, given by the distance from $c$, with the other fibers being $2$-tori. 
A generator of $\Lambda$ acts by an element
$\left( \tau, e^{i\phi} \right)$ of $N$. For $V$ to be Gromov-Hausdorff close to a ray, 
$\phi$ has to be a small nonzero number
and $\tau$ has to be small relative to $\phi$. Note that $V = \widetilde{V}/\Lambda$ is 
diffeomorphic to a solid torus.  

Given $\beta > 0$, let $\widetilde{W}$ be the $\beta K_0^{- 1/2}$-neighborhood of 
$\widetilde{c}$ in 
$\widetilde{V}$.  We claim that $\beta$ can be chosen small enough, independent of
the other parameters, so that any noncontractible loop on $\partial \widetilde{W}$ (in
the sense that it cannot be contracted in $\partial \widetilde{W}$)
has length at least $\pi \beta K_0^{- 1/2}$.  To see this, if we rescale 
$\widetilde{W}$ so that it is the unit distance neighborhood of $\widetilde{c}$ then
as $\beta \rightarrow 0$, the result approaches the flat isometric product $\R \times
B^2$ uniformly in the $C^{1, \alpha}$-topology. For the latter, the shortest 
noncontractible loop on the boundary of the unit distance neighborhood of
$\R \times \{0\}$
has length $2 \pi$, from which the
claim follows. We will take $\beta  \ll  K_0^{1/2} L/100$. 

Put $W = \widetilde{W}/\Lambda$. If $\sigma$ is a noncontractible loop on $\partial W$ 
that contracts in $W$ then $\sigma$ lifts to a noncontractible loop on $\partial \widetilde{W}$, and hence has length at least $\pi \beta K_0^{- 1/2}$.

There is an open set 
$U \subset M$ with $B(m_0, 3.9L) \subset U \subset B(m_0, 4L)$ so that on $U-W$, the model geometry has a Riemannian submersion 
$F : \left( U - W \right) \rightarrow[a,b)$ whose fibers are flat $2$-tori with diameter
at most $\const \delta_5$ and second fundamental form at most
$\const K_0^{1/2}$ in norm (where $\const$ depends on $\beta$) \cite[Theorem 2.6]{Cheeger-Fukaya-Gromov (1992)}.
Using the (integrable) horizontal distribution to trivialize the fibration as
$G : \left( U - W \right)  \rightarrow [a,b) \times T^2$, we can write
the model metric as $\widehat{g} = dt^2 + h_t$, where 
$t \in [a,b)$ and
$h_t$ is a flat metric on $T^2$.  The shape operator of a fiber is $\frac12 h_t^{-1}
\partial_t h_t$. 

We can assume that
this torus fibration matches up with the torus fibration on a neighborhood of
$\partial W$ in $W$
\cite[Theorem 1.7]{Cheeger-Fukaya-Gromov (1992)}.  Suppose that $\gamma$ is a loop in
$B(m_0, L/100)$, going through $m_0$, that can be
contracted in ${B(m_0, 2L)}$ but not in ${B(m_0, L/10)}$. If the distance from $x_0$ to the tip $x_1$ of the ray is
less than $L/20$ then ${B(m_0, 2L)} - {B(m_0, L/10)}$ lies in the torus-fibered region $U-W$
and $B(m_0, 2L)$ can be retracted to 
$B(m_0, L/10)$, 
which contradicts the
assumption about $\gamma$.  Hence $d(x_0, x_1) \ge L/20$.
For similar reasons, $d(x_0, x_1) \le 5L$.  Hence
$|b-a| \le 10L$.

As $\gamma$ cannot be contracted in ${B(m_0, L/10)}$,
and lies in $B(m_0, L/100) \subset U-W$, it is homotopic to a noncontractible loop in $F^{-1}(m_0)$.
We now sweep $\gamma$ to $\partial W$. That is, writing $G(\gamma(t)) = 
(\gamma_1(t), \gamma_2(t))$, we put
$\widehat{\gamma}(t) = G^{-1}(a, \gamma_2(t))$. The loop $\widehat{\gamma}$ is
noncontractible in $\partial W$. By the bound on the 
shape operators of the fibers of $F$, there is a bound
$l(\widehat{\gamma}) \le e^{\const K_0^{1/2} L} l(\gamma)$. As $\gamma$
contracts in ${B(m_0, 2L)}$, the loop $\widehat{\gamma}$ contracts in $W$.
Hence $l(\gamma) \ge \pi \beta K_0^{- 1/2} e^{- \const K_0^{1/2} L}$, as measured with
$\widehat{g}$. Taking into account that $g$ and $\widehat{g}$ are
$e^\epsilon$-biLipschitz, this proves the lemma.
\end{proof}

We now prove Proposition \ref{3.1}.
Fix a time parameter $u > 0$. 
From Corollary \ref{2.14}, there is a sequence $\{K_i\}_{i=0}^\infty$ so that
for all $s \ge 1$, the metric $g_s(u)$ is $\vec{K}$-regular.

We first describe how we choose parameters. Put $L=1$.
Let $\delta_5$ be the parameter of Lemma \ref{gl4}.
Choose $\delta_3 < \delta_5$ with $\delta_3 \ll 1$.  Let
$\delta_4 = \delta_4(\vec{K},1,\delta_3)$ be the parameter from Lemma \ref{gl3}.
In reference to the parameters $\overline{\delta_1}$ and $\const_1$ of Lemma \ref{gl1}, and ${\mathcal L}$ of Lemma \ref{gl4}, choose $\delta_1 <
\min(\overline{\delta_1}, \delta_4)$ so that $\const_1 \delta_1 < 
\min(\delta_4, {\mathcal L})$. Given this value of $\delta_1$ and in reference to the
parameters $\overline{\delta_2}$ and $\const_2$ of Lemma \ref{gl2}, 
choose $\delta_2 < \min(\overline{\delta_2}, \delta_4)$ so that $\const_2 \delta_2 < 
\min(\delta_4, {\mathcal L})$.

Suppose that the proposition is not true.  Then there is a positive nonincreasing function $\epsilon_s$ with $\lim_{s \rightarrow \infty} 
  \epsilon_s = 0$ so that the
  metric space $(M, {\widehat{d}}_{s,u}, m_0)$ has pointed Gromov-Hausdorff distance
  at most $\epsilon_s$ from a one or two dimensional
complete pointed Alexandrov space with Alexandrov curvature bounded below by 
$-2K_0$.
Take $s_0$ so that $\epsilon_{s_0} \ll \min(\delta_1, \delta_2)$. 
If $(M, {\widehat{d}}_{s_0,u}, m_0)$ is $\delta_1$-close to a one dimensional
complete pointed Alexandrov space then we can apply Lemma \ref{gl1}.  If not,
we can apply Lemma \ref{gl2}. In either case we get a loop $\gamma$ through
$m_0$, with length at most $\min(\delta_4, {\mathcal L})$, that is not
contractible in ${B_{\widehat{d}_{s_0,u}}(m_0, 1)}$.

As $M$ is diffeomorphic to $\R^3$, there is some $\Delta > 1$
so that $\gamma$ can be contracted in ${B_{\widehat{d}_{s_0,u}}(m_0, \Delta)}$.
Let ${\mathcal C}$ be the set of $s \ge s_0$ so that $\gamma$ can be contracted in ${B_{\widehat{d}_{s,u}}(m_0, 1)}$. By the right-hand inclusion of
(\ref{2.20}), large values of $s$ are in ${\mathcal C}$, and if
$s \in {\mathcal C}$ then $s^\prime \in {\mathcal C}$ for all $s^\prime > s$.
By (\ref{2.20}), ${\mathcal C}$ is open in $\R$. Hence it is a
half-open interval $(s_1, \infty)$ for some $s_1 \ge s_0$.  

In particular, $\gamma$ cannot be contracted in ${B_{\widehat{d}_{s_1,u}}(m_0, 1/10)}$.
On the other hand, if $s$ is slightly greater than $s_1$ then $\gamma$ can be contracted in
${B_{\widehat{d}_{s,u}}(m_0, 1)}$. Then by the left-hand inclusion of (\ref{2.20}),
$\gamma$ can be contracted in ${B_{\widehat{d}_{s_1,u}}(m_0, 2)}$. By (\ref{2.19}), the length of $\gamma$
with respect to $g_{s_1}(u)$ is still less than
$\min(\delta_4, {\mathcal L})$. 

Lemma \ref{gl3} implies that
$(M, m_0)$ has pointed Gromov-Hausdorff distance at most $\delta_3 < \delta_5$ from some ray
$(X,x_0)$. Lemma \ref{gl4} now implies that
the length of $\gamma$, with respect to $g_{s_1}(u)$, is at least
${\mathcal L}$.  This is a contradiction.
  
\section{Cubic volume growth} \label{sect4}

\begin{proposition} \label{4.1}
  Under the hypotheses of Proposition \ref{1.6}, and with reference to
  Proposition \ref{3.1},
both
  $(M, g_0)$ and $(M_\infty, g_\infty(u))$ have cubic volume growth.
  In addition, each tangent cone at infinity of $(M_\infty, g_\infty(u))$
  is isometric to the tangent cone at infinity
  $T_\infty M =
  \lim_{i \rightarrow \infty} \left( M, m_0,
  s_i^{- \: \frac12} d_0 \right)$
  of $M$.
\end{proposition}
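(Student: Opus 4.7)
My plan is to exploit the sandwich estimate
\[ s^{-1/2} d_0 - C' \sqrt{u} \; \le \; \widehat{d}_{s,u} \; \le \; s^{-1/2} d_0 \]
from (\ref{2.17}), together with the fact that Ricci flow with $\Ric \ge 0$ is volume-decreasing and that Proposition \ref{3.1} supplies a smooth $3$-dimensional Cheeger-Hamilton limit $(M_\infty, g_\infty(u), m_\infty)$.

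For cubic volume growth of $(M,g_0)$, the right-hand inequality gives the containment $B_{\widehat{d}_{s_i,u}}(m_0, R) \subset B_{d_0}(m_0, (R+C'\sqrt{u}) s_i^{1/2})$. Since $\vol_{g(t)}(U) \le \vol_{g_0}(U)$ for every open $U$ and $t \ge 0$,
\[ \vol_{g_0} B_{d_0}(m_0, (R+C'\sqrt{u}) s_i^{1/2}) \; \ge \; \vol_{g(s_i u)} B_{\widehat{d}_{s_i,u}}(m_0, R) \; = \; s_i^{3/2} \vol_{g_{s_i}(u)} B_{\widehat{d}_{s_i,u}}(m_0, R). \]
By Proposition \ref{3.1} the factor $\vol_{g_{s_i}(u)} B_{\widehat{d}_{s_i,u}}(m_0, R)$ converges as $i \to \infty$ to $V_\infty(R) := \vol_{g_\infty(u)} B(m_\infty, R)$, which is strictly positive since $M_\infty$ is a smooth $3$-manifold. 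Setting $\rho_i = (R+C'\sqrt{u}) s_i^{1/2}$ gives $\liminf_i \rho_i^{-3} \vol_{g_0} B(m_0, \rho_i) \ge V_\infty(R)/(R+C'\sqrt{u})^3 > 0$, and the matching upper bound is Bishop-Gromov.

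To identify the tangent cone at infinity of $(M_\infty, g_\infty(u))$, pass to a further subsequence of $\{s_i\}$ so that $(M, s_i^{-1/2} d_0, m_0) \to T_\infty M$ in pointed Gromov-Hausdorff topology. The sandwich forces $(M, s_i^{-1/2} d_0, m_0)$ and $(M, \widehat{d}_{s_i,u}, m_0)$ to lie within pointed Gromov-Hausdorff distance $C'\sqrt{u}$, so $T_\infty M$ is within $C'\sqrt{u}$ of $(M_\infty, d_{g_\infty(u)}, m_\infty)$ in the limit. Rescaling the sandwich by $\lambda > 0$ and letting $i \to \infty$ sharpens the error to $\lambda^{-1} C'\sqrt{u}$ between $(T_\infty M, \lambda^{-1} d_{T_\infty M}, \star)$ and $(M_\infty, \lambda^{-1} d_{g_\infty(u)}, m_\infty)$. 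Since $(M, g_0)$ has cubic volume growth by the first step, Cheeger-Colding makes $T_\infty M$ a metric cone with vertex $\star$, and $(T_\infty M, \lambda^{-1} d_{T_\infty M}, \star)$ is therefore isometric to $T_\infty M$ for every $\lambda$; letting $\lambda_j \to \infty$ along any sequence kills the error, and every tangent cone at infinity of $(M_\infty, g_\infty(u))$ is isometric to $T_\infty M$.

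Finally, nonnegativity of $\Ric$ passes to Cheeger-Hamilton limits, so Bishop-Gromov on $g_\infty(u)$ makes $V_\infty(R)/R^3$ nonincreasing and gives the cubic upper bound for $(M_\infty, g_\infty(u))$. By Cheeger-Colding the limiting ratio $\lim_{R \to \infty} V_\infty(R)/R^3$ is the normalized volume of a unit ball in the tangent cone at infinity, which by the previous step is the $3$-dimensional cone $T_\infty M$ of positive asymptotic volume ratio. This gives the cubic lower bound. The main technical obstacle is the interchange of limits in the tangent cone step: one must pass to nested subsequences so that the Cheeger-Hamilton and Gromov-Hausdorff limits coexist after each rescaling, and invoke Cheeger-Colding in the noncollapsed $\Ric \ge 0$ setting to promote $T_\infty M$ to a genuine metric cone, which is what makes the final $\lambda \to \infty$ limit trivialize.
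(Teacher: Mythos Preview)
Your proof is correct and follows essentially the same route as the paper: both arguments use the sandwich estimate (\ref{2.17}) together with volume monotonicity under Ricci flow and the smooth three-dimensional limit from Proposition \ref{3.1} to get cubic volume growth of $(M,g_0)$, and then use the sandwich again to identify the tangent cone at infinity of $(M_\infty,g_\infty(u))$ with $T_\infty M$. The only cosmetic difference is that the paper passes the sandwich directly to the limit as the inequality (\ref{4.4}) between two metrics on the same underlying set, whereas you phrase it in terms of pointed Gromov--Hausdorff distance and make the appeal to Cheeger--Colding (for $T_\infty M$ being a metric cone, and for volume convergence in the $M_\infty$ step) explicit where the paper leaves it implicit.
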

\begin{proof}
We know that
the pointed limit
  $\lim_{i \rightarrow \infty} (M, g_{s_i}(\cdot), m_0)$ exists as a
  Ricci flow $(M_\infty, g_\infty(\cdot), m_\infty)$ on a
  pointed $3$-manifold $(M_\infty, m_\infty)$. We claim first that
  $(M, g_0)$ has cubic volume growth. Fix $u > 0$. Given $R > 0$, put
  $U_i = B_{\widehat{d}_{s_i,u}}(m_0, R)$ and
  $C_R = \vol(B(m_\infty, R), g_\infty(u))$. Then 
  for large $i$, using (\ref{2.17}) we have
  \begin{equation} \label{4.2}
    s_i^{- \: \frac32} \vol(U_i, d_0) = \vol(U_i, s_i^{- \: \frac12} d_0) \ge
    \vol(U_i, \widehat{d}_{s_i,u}) \ge \frac12 C_R,
  \end{equation}
  where $\vol$ denotes the $3$-dimensional Hausdorff mass computed with the
  given metric.
  Also from (\ref{2.18}), we have
  $U_i \subset B_{d_0}(m_0, s_i^{\frac12} (R+C^\prime \sqrt{u}))$. Hence
  \begin{equation} \label{4.3}
    \vol(B_{d_0}(m_0, s_i^{\frac12} (R+C^\prime \sqrt{u}))) \ge
    \frac12 C_R s_i^{\frac32}.
  \end{equation}
  Since $r^{-3} \vol(B(m_0, r), g_0)$ is nonincreasing in $r$, it follows that
  there is some $v_0 > 0$ so that for all $r > 0$, we have
  $\vol(B(m_0, r), d_0) \ge v_0 r^3$.

Let $d_\infty$ denote the metric on $T_\infty M$. 
Let $\widehat{d}_{\infty, u}$ denote the metric on $(M_\infty, g_\infty(u))$.
From (\ref{2.17}), we have
\begin{equation} \label{4.4}
  d_\infty - C^\prime \sqrt{u} \le \widehat{d}_{\infty,u} \le
  d_\infty
\end{equation}
on $T_\infty M - B_{d_\infty}(\star_\infty, C^\prime \sqrt{u})$. Hence
the tangent cone at infinity of $(M_\infty, g_\infty(u))$ is unique and
  is isometric to $(T_\infty M, d_\infty)$. 
  \end{proof}

\begin{proposition} \label{4.5}
  If $\{s_i\}_{i=1}^\infty$
  is any sequence tending to infinity then after passing to a subsequence,
  there is a pointed limit
  $\lim_{i \rightarrow \infty} (M, g_{s_i}(\cdot), m_0)$ as a
  Ricci flow on a pointed $3$-manifold, defined for times $u \in (0, \infty)$. 
  \end{proposition}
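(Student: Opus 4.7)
My plan is to extract the uniform non-collapsing content that is already implicit in the proof of Proposition \ref{3.1} and then combine it with standard Gromov-Hamilton compactness. The first step is to isolate the following strengthening of Proposition \ref{3.1}: for every $u > 0$ and every $\epsilon > 0$, there exists $S(u,\epsilon) < \infty$ such that for all $s \ge S(u,\epsilon)$, the pointed metric space $(M, \widehat{d}_{s,u}, m_0)$ is not within pointed Gromov-Hausdorff distance $\epsilon$ of any complete pointed metric space of Hausdorff dimension one or two. The key observation is that the topological contradictions derived in Subsections \ref{subsect3.1} and \ref{subsect3.2} depend only on the global topology of $M$ (being diffeomorphic to $\R^3$) together with the distance distortion of Lemma \ref{2.15} and the curvature estimates of Proposition \ref{2.13} and Corollary \ref{2.14}, not on any feature of the sequence used for the blowdown. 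Were the strengthening to fail along some sequence $\sigma_i \to \infty$, applying the $\mu$-function / solid-torus argument of Subsection \ref{subsect3.1} (in the one-dimensional case) or the iterated Seifert-fibration argument of Subsection \ref{subsect3.2} via Lemmas \ref{3.5} and \ref{3.7} (in the two-dimensional case) along $\sigma_i$ would produce the same contradictions as before.

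With the uniform statement in hand, I would deduce Proposition \ref{4.5} by standard extraction. Given any $s_i \to \infty$, fix a countable dense set $\{u_k\}_{k=1}^\infty \subset (0, \infty)$. For each $u_k$, the family $\{(M, g_{s_i}(u_k), m_0)\}_i$ is precompact in the pointed Gromov-Hausdorff topology by Gromov's precompactness theorem, using $\Ric \ge 0$. A diagonal extraction yields a subsequence (still denoted $s_i$) along which $(M, \widehat{d}_{s_i, u_k}, m_0)$ converges for every $k$. The uniform non-collapsing statement forces each such limit to have Hausdorff dimension three. Combined with the uniform curvature bound $\|\Rm(g_{s_i}(u_k))\|_\infty \le C/u_k$ and the uniform higher-derivative bounds from Corollary \ref{2.14}, Hamilton's compactness theorem upgrades the Gromov-Hausdorff convergence on each time slice $u_k$ to pointed smooth convergence to a Riemannian three-manifold $(M_\infty, g_\infty(u_k), m_\infty)$.

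Finally, the parabolic derivative estimates of Corollary \ref{2.14} control all space-time derivatives of $\Rm$ on every compact subinterval $[u_-, u_+] \subset (0, \infty)$, so the smooth convergence along the countable dense set of times promotes to smooth convergence of the entire Ricci flow in the pointed Cheeger-Hamilton topology, defined for all $u \in (0, \infty)$. The main (and indeed only nontrivial) obstacle is the first step: recognizing that the obstructions to lower-dimensional collapse built in Section \ref{sect3} are sequence-independent and can be repackaged as a uniform non-collapsing statement. The second and third steps are routine applications of the standard compactness machinery.
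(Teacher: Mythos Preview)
Your first step contains a genuine gap. The contradiction arguments in Subsections \ref{subsect3.1} and \ref{subsect3.2} are not sequence-independent in the way you claim: they are run under the full negation of Proposition \ref{3.1}, which says that for \emph{every} $\epsilon > 0$ there is $\widehat{s}(\epsilon)$ such that \emph{all} $s \ge \widehat{s}(\epsilon)$ are $\epsilon$-close to a space of dimension at most two. The one-dimensional argument starts at some $s_0$ and then passes, via the continuous function $\mu$ of Lemma \ref{3.4}, to the value $s_1$ characterized by $\mu(s_1)=1$; at $s_1$ the argument must invoke the collapse hypothesis again to conclude that $(M,\widehat d_{s_1,u},m_0)$ is close to a two-dimensional space and then apply Lemma \ref{1dlem}. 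Likewise, Lemma \ref{3.5} is stated and used for \emph{all} $s \ge s_0$, and the iteration in Lemma \ref{3.7} needs the fibration structure at the full geometric progression $s_j = \Lambda^j s_0$. If collapse is only assumed along a sparse sequence $\sigma_i \to \infty$, neither $s_1$ nor the $s_j$ need lie in $\{\sigma_i\}$, and both arguments break down. So ``applying the arguments along $\sigma_i$'' does not reproduce the contradictions.

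The paper's proof takes a different and much shorter route that avoids this issue. Proposition \ref{4.1}, which only needs the \emph{single} good sequence supplied by Proposition \ref{3.1}, already shows that the initial metric $(M,g_0)$ has positive asymptotic volume ratio $v_\infty > 0$. The distance distortion estimate (\ref{2.17}) then forces every rescaled time slice $(M,g_s(u))$ to share the tangent cone at infinity of $(M,g_0)$, hence the same asymptotic volume ratio $v_\infty$. Since $\Ric \ge 0$, Bishop--Gromov converts this into the uniform lower bound $\vol\big(B_{\widehat d_{s,u}}(m_0,R)\big) \ge v_\infty R^3$, valid for all $s$ and all $R$. Together with $|\Rm(g_s(u))| \le C/u$, this is exactly the input to Hamilton's compactness theorem, and the proposition follows immediately for any sequence $s_i \to \infty$. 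The idea you are missing is that cubic volume growth of the \emph{original} metric $g_0$---not a rerun of the collapsing analysis---is the mechanism that transfers non-collapsing from one blowdown sequence to all of them.
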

\begin{proof}
Put $v_\infty = \lim_{r \rightarrow \infty} r^{-3} \vol(B_{d_0}(m_0, r),
g_0) > 0$, the asymptotic volume ratio of $(M, g_0)$.
Fix $u > 0$. For any $s > 1$, from (\ref{2.17}) a tangent cone at infinity of
$(M, g_s(u))$ is isometric to a tangent cone at infinity of
$(M, g_0)$. Hence the asymptotic volume ration of $(M, g_s(u))$ is $v_\infty$.
Given $R > 0$, the Bishop-Gromov inequality implies that
  $\vol(B_{\widehat{d}_{s,u}}(m_\infty, R), \widehat{d}_{s,u}) \ge
v_\infty R^3$. As $|\Rm(g_s(u))| \le \frac{C}{u}$, the claim follows
from the Hamilton compactness theorem.
\end{proof}

The next lemma will be used in Section \ref{sect5}.

\begin{lemma} \label{4.6}
  A three dimensional complete gradient expanding soliton $(M,g)$
  with bounded sectional curvature, $c$-pinched nonnegative Ricci
  curvature, and cubic volume growth, must be isometric to
  flat $\R^3$.
\end{lemma}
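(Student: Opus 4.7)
My plan is to analyze the tangent cone at infinity $T_\infty M$ of the soliton, use the $c$-Ricci pinching to force $T_\infty M$ to be isometric to $\R^3$, and then conclude via Colding's volume rigidity theorem that $(M,g)$ itself is flat Euclidean space. This implements, in the soliton context, the conical structure argument sketched in the introduction.

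First I reduce to the framework of Propositions \ref{1.6}--\ref{4.1}. Arguing by contradiction, assume $(M,g)$ is nonflat; then $\Ric > 0$ by the strong maximum principle, and $M$ is diffeomorphic to $\R^3$ by Schoen-Yau. Let $p_*$ denote the unique critical (and maximum) point of the soliton potential $f$; its existence follows from $\Hess(f) = - \frac{1}{2}g - \Ric < 0$, so that $f$ is strictly concave. The associated self-similar Ricci flow $g(t) = t\,\phi_{\log t}^* g$ (where $\phi_s$ is the gradient flow of $f$) satisfies the hypotheses of Proposition \ref{1.6}, and by Propositions \ref{3.1} and \ref{4.1} the tangent cone at infinity $T_\infty M$ is well defined and is a metric cone $C(L)$ over a compact two dimensional Alexandrov space $L$ of curvature $\ge 1$. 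For an expanding gradient Ricci soliton with $\Ric \ge 0$ and Euclidean volume growth, the curvature decays quadratically at infinity, $|\Rm|(x) \le C \, d_g(x, p_*)^{-2}$ — a standard consequence of the soliton identity $R + |\nabla f|^2 + f = \const$ combined with Bishop-Gromov (as in the work of Chen-Deruelle on expanding solitons). Quadratic decay implies that the rescalings $(M, r^{-2} g, x_r)$ with $d_g(x_r, p_*) \asymp r \to \infty$ have uniformly bounded curvature on unit balls, and by Hamilton compactness they subconverge smoothly on the regular part of $T_\infty M$; in particular $L$ is a smooth two manifold with no orbifold cone singularities.

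Next I pass the $c$-Ricci pinching to $T_\infty M$. At a smooth point of $C(L)$ at radius $r$ from the vertex, the Ricci tensor has eigenvalues $r_1 = 0$ (radial) and $r_2 = r_3 = (K_L - 1)/r^2$ (tangential, double), where $K_L$ is the Gauss curvature of $L$ at the corresponding point. Smooth convergence propagates $r_1 \ge c r_3$ to $C(L)$; the radial vanishing $r_1 = 0$ together with $c > 0$ forces $K_L \le 1$, and combined with $\Ric \ge 0$ (giving $K_L \ge 1$) we obtain $K_L \equiv 1$. So $L$ is a smooth surface of constant Gauss curvature one. Since $M \cong \R^3$ is simply connected with a single end, the link must be topologically $S^2$: for any large $R$, a smooth metric sphere $\partial B_g(p_*, R)$ separates $\R^3$ into two simply connected pieces and is hence homeomorphic to $S^2$, and this topology passes to the Gromov-Hausdorff limit $L$. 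Therefore $L$ is the round $S^2$ and $T_\infty M = C(S^2) = (\R^3, g_E)$, so the asymptotic volume ratio of $(M,g)$ equals $\omega_3$. Colding's volume rigidity theorem then gives $(M,g)$ isometric to flat $(\R^3, g_E)$, contradicting nonflatness.

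The main obstacle I expect is the smooth convergence of rescalings on the regular part of $T_\infty M$, which requires the quadratic curvature decay for expanders. Deriving this decay directly from the soliton equation is nontrivial; one must exploit the identity $R + |\nabla f|^2 + f = \const$ and analyze the level sets of $f$, or appeal to Chen-Deruelle. If one prefers to use only material from earlier sections of the paper, the step of passing $c$-pinching to $T_\infty M$ should instead be routed through weak convergence of curvature operators in the sense of Lebedeva-Petrunin, in parallel with the argument used in Section \ref{sect6} for Theorem \ref{1.4}.
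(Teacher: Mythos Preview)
Your argument is correct and reaches the same endpoint (tangent cone at infinity is $\R^3$, then Colding rigidity), but the route differs from the paper's in one essential respect. The paper invokes \cite[Proposition 3.1]{Ni (2005)} to obtain \emph{exponential} curvature decay for the expanding soliton; under rescaling this forces the curvature to vanish identically on $X_\infty - x_\infty$, so the cone is flat away from the vertex with no further appeal to the $c$-pinching. You instead establish only quadratic decay and then run the cone argument of Proposition \ref{5.3}, using the radial Ricci eigenvalue $r_1 = 0$ together with $c$-pinching to force $K_L \equiv 1$. Both work; the paper's route is shorter and more self-contained (exponential decay bypasses the pinching step on the cone), while yours reuses the template of Proposition \ref{5.3} and trades the citation of Ni for that of Chen--Deruelle. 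Two minor remarks: your detour through Propositions \ref{1.6}, \ref{3.1}, \ref{4.1} to produce $T_\infty M$ is unnecessary, since cubic volume growth is already a hypothesis and Bishop--Gromov gives the metric cone directly; and your exclusion of $\R P^2$ via large metric spheres in $M \cong \R^3$ is fine, though the paper's one-line appeal to orientability of $M$ is cleaner.
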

\begin{proof}
  If $(M,g)$ is flat then because of the cubic volume growth, it must be
  isometric to $\R^3$. Hence we can assume that $\Ric(M,g) > 0$.
  From \cite[Proposition 3.1]{Ni (2005)}, $(M,g)$ has exponential curvature
  decay. Fix a basepoint $m_0$.
  We can find a sequence $\alpha_i \rightarrow \infty$ so that
  $\{(M, \alpha_i^{-2} g, m_0)\}_{i=1}^\infty$ converges in the pointed
  Gromov-Hausdorff topology
  to a three dimensional tangent cone at infinity $(X_\infty, x_\infty)$ of $(M,g)$.
  In particular, $(X_\infty, x_\infty)$ is a cone over a connected compact surface.
  Because of the quadratic curvature decay, after passing to a further
  subsequence we can assume that there is a
  $W^{2,p}$-regular Riemannian metric on $X_\infty - x_\infty$, along with
  convergence of metrics in 
  the weak $W_{loc}^{2,p}$-topology;  c.f. \cite{Kasue (1989)} and
  \cite[Sections 4 and 5]{Petersen (1997)}.
  From the weak $W_{loc}^{2,p}$-convergence and the exponential curvature
  decay of $(M,g)$,
  the Riemannian metric on $X_\infty - x_\infty$ is flat.
  Hence
  $X_\infty$ is a cone over the round $S^2$ or its $\Z_2$-quotient
  $\R P^2$.  As $M$ was orientable, the second possibility cannot occur,
  so $X_\infty$ is the flat $\R^3$. Then by
  \cite[Theorem 0.3]{Colding (1997)}, $(M,g)$ is flat,
  which is a contradiction.
\end{proof}

\begin{remark}
Under the additional
assumption of 
nonnegative sectional curvature, Lemma \ref{4.6} was proven in
\cite{Chen-Zhu (2000)}.
\end{remark}

\section{Proof of Theorem \ref{1.3}} \label{sect5}

\begin{proposition} \label{5.1}
If $(M, g_0)$ has nonnegative sectional curvature then Conjecture \ref{1.1} holds.
    \end{proposition}
\begin{proof}
  It is enough to prove that Conjecture \ref{1.2} holds, so
  we will assume that $\Ric_M > 0$, with $M$ noncompact,
  and derive a contradiction.
  Using Proposition \ref{4.1} and \cite[Theorem 1.2]{Schulze-Simon (2013)},
  there is a blowdown limit $(M_\infty, g_\infty(\cdot), m_\infty)$
  that is an gradient expanding soliton.
  From Lemma \ref{4.6}, it must be isometric
 to $\R^3$.
  Hence $T_\infty M$ is isometric to $\R^3$.
  By \cite[Theorem 0.3]{Colding (1997)},
   $(M, g_0)$ is isometric to $\R^3$, which contradicts our assumption that
  $\Ric_M > 0$. 
  \end{proof}

\begin{remark} \label{5.2}
To clarify a technical point, 
in \cite{Chen-Zhu (2000)} use is made of
\cite[Theorem 16.5]{Hamilton (1995)} to say that
$A = \limsup_{t \rightarrow \infty} t \|\Rm(g(t))\|_\infty$ is positive.
The proof of 
\cite[Theorem 16.5]{Hamilton (1995)} is based on
\cite[Theorem 16.4]{Hamilton (1995)}, which has a similar conclusion
without an assumption of positivity of curvature, but
whose proof is only valid in the
compact case (since it invokes the diameter). In fact, there are
noncompact counterexamples to \cite[Theorem 16.4]{Hamilton (1995)}.
With nonnegative curvature operator, the trace Harnack inequality
directly implies that $A > 0$ for nonflat solutions.
    \end{remark}
    
    \begin{lemma} \label{fixed}
      If $(M, g)$ has $c$-pinched nonnegative Ricci curvature then for any tangent vector $v$,
      we have 
      \begin{equation} \label{RR}
      R g(v,v) \le \left( 1 + \frac{2}{c} \right) \Ric(v,v).
      \end{equation}
    \end{lemma}
    \begin{proof}
      Working in a tangent space $T_mM$, we can restrict to unit vectors $v$.  Lagrange
      multipliers show that the maximum of $R g(v,v) - \left( 1 + \frac{2}{c} \right) \Ric(v,v)$, over unit vectors in $T_mM$, is realized at a unit eigenvector of the
      Ricci operator. Let $r_1 \le r_2 \le r_3$ be the eigenvalues of the Ricci operator.
      As $r_2 \le r_3 \le \frac{1}{c} r_1$, we have 
      \begin{equation}
      R = r_1 + r_2 + r_3 \le 
      \left( 1 + \frac{2}{c} \right) r_1 \le \left( 1 + \frac{2}{c} \right) r_i
        \end{equation}
        for each $i \in \{1,2,3\}$.  This proves the lemma.
      \end{proof}

\begin{proposition} \label{5.3}
 If $(M, g)$ has quadratic curvature decay then Conjecture \ref{1.1} holds.
    \end{proposition}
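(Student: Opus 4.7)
The plan is to argue by contradiction: assuming $M$ is noncompact with $\Ric(M, g_0) > 0$ (using the equivalence of Conjectures \ref{1.1} and \ref{1.2}), I will show that the tangent cone at infinity of $(M, g_0)$ is isometric to flat $\R^3$. Then \cite[Theorem 0.3]{Colding (1997)}, applied exactly as in the proof of Proposition \ref{5.1}, forces $(M, g_0) = \R^3$, contradicting $\Ric > 0$. The overall shape of the argument closely parallels the proof of Lemma \ref{4.6}, with the $c$-Ricci pinching hypothesis replacing the role played there by exponential curvature decay for gradient expanding solitons.

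First I would extract a tangent cone at infinity. By Corollary \ref{1.7}, $(M, g_0)$ has cubic volume growth, so Bishop-Gromov yields uniform noncollapsing of the rescalings $h_s = s^{-1} g_0$ as $s \to \infty$. Along any sequence $s_i \to \infty$, passing to a subsequence gives a pointed Gromov-Hausdorff limit $(M, h_{s_i}, m_0) \to (T_\infty M, d_\infty, x_\infty)$; by the Cheeger-Colding cone theorem this limit is a metric cone $C(L)$ of Hausdorff dimension three, with two-dimensional link $L$. The Cheeger-Gromoll splitting theorem applied to $(M, g_0)$, together with the strict positivity of Ricci, rules out any Euclidean factor of $M$, so $L$ is connected, and orientability of $M \cong \R^3$ (Schoen-Yau) makes $L$ orientable as well.

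Next I would upgrade to $W^{2,p}_{loc}$-convergence off the vertex, exploiting the fact that quadratic curvature decay is scale-invariant: $|\Rm(h_{s_i})|(m) \le A/d_{h_{s_i}}(m, m_0)^2$ with the same constant $A$ as for $g_0$. On any annulus $\{\delta \le d_{h_{s_i}}(\cdot, m_0) \le \delta^{-1}\}$ the sectional curvatures are uniformly bounded by $A\delta^{-2}$, and combined with Bishop-Gromov noncollapsing, Cheeger's lemma produces a uniform positive lower bound on the injectivity radius. Anderson's $W^{2,p}$-precompactness theorem then yields, after a further subsequence, $W^{2,p}_{loc}$-convergence on $T_\infty M \setminus \{x_\infty\}$ to a $W^{2,p}$ Riemannian metric of the form $g_{\mathrm{cone}} = dr^2 + r^2 g_L$.

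Finally I would combine the cone structure with the $c$-Ricci pinching. Since $\Ric$ involves two derivatives of the metric, the pinching inequality $r_1 \ge c r_3$ passes to the $W^{2,p}$ limit. At every point of $T_\infty M \setminus \{x_\infty\}$ the cone metric has vanishing radial Ricci eigenvalue, so $r_1 = 0$; combined with $r_3 \ge 0$ and $c > 0$, this forces $r_3 = 0$ and hence $\Ric(g_{\mathrm{cone}}) \equiv 0$. In dimension three this means $g_{\mathrm{cone}}$ is flat off the vertex, and the cone formula forces $(L, g_L)$ to have constant Gauss curvature $1$; elliptic regularity then lifts $g_L$ to a smooth metric. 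Thus $L$ is a connected orientable $2$-manifold of constant curvature $1$, so $L = S^2$, $T_\infty M = C(S^2) = \R^3$, and Colding's theorem closes the argument. I expect the main technical step to be the careful justification of the $W^{2,p}_{loc}$-convergence together with the passage of $c$-Ricci pinching to this limit; both are standard given the scale-invariance of quadratic curvature decay and the noncollapsing, but take some care to verify near the vertex.
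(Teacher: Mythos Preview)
Your approach is correct in outline and reaches the same conclusion, but it takes a genuinely different route from the paper. The paper does \emph{not} rescale $(M,g_0)$ directly. Instead it passes first to the blowdown Ricci flow solution $(M_\infty, g_\infty(\cdot))$ from Proposition~\ref{1.6}, invokes pseudolocality \cite{Lu (2010)} to show that a fixed small time-slice $g_\infty(u)$ inherits quadratic curvature decay, and then uses Shi's local derivative estimates to obtain $|\nabla^I\Rm|_{g_\infty(u)} = O(\widehat d_\infty(m_\infty,\cdot)^{-|I|-2})$. With these higher-derivative bounds, the spatial rescalings $\alpha_i^{-2} g_\infty(u)$ converge \emph{smoothly} away from the basepoint to $T_\infty M$ (identified via Proposition~\ref{4.1}), so the limit is a genuine $C^\infty$ cone metric on which the $c$-pinching and the vanishing of radial Ricci are pointwise statements requiring no further justification.

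Your route---rescaling $g_0$ itself and appealing to Anderson's $W^{2,p}$ compactness---is more elementary in that it avoids pseudolocality and Shi's estimates, and it is closer in spirit to the proof of Lemma~\ref{4.6}. The cost is exactly the technical step you flag: with only $W^{2,p}_{loc}$ convergence you must justify that the limit metric really has the warped form $dr^2 + r^2 g_L$, that the radial Ricci eigenvalue vanishes (say a.e.), and that the $c$-pinching inequality survives weak limits. These are all true, but they are not quite ``standard''; the paper's detour through the Ricci flow buys enough regularity to make them immediate.
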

\begin{proof}
  We will assume that $\Ric_M > 0$, with $M$ noncompact,
  and derive a contradiction. 
  Given $\alpha > 1$,  consider the rescaled metric $\alpha^{-2} g$.
  Using Proposition \ref{4.1}, there is a sequence
  $\{\alpha_i\}_{i=1}^\infty$ tending to infinity so that
  $\{\left( M, \alpha_i^{-2} g, m_0 \right)
  \}_{i=1}^\infty$
  has a limit $T_\infty M = \lim_{i \rightarrow \infty}
  \left( M, \alpha_i^{-2} g_0, m_0 \right)$
  in the pointed Gromov-Hausdorff topology,
  where $T_\infty M$ is a three dimensional 
  metric cone with a connected link
  \cite[Theorem 7.6]{Cheeger-Colding (1996)}.
  Furthermore, from the quadratic curvature decay, $T_\infty M$ is a
  smooth manifold away from the vertex, where it has a 
  $W^{2,p}_{loc}$, $p < \infty$, or $C^{1, \alpha}_{loc}$, $\alpha \in (0,1)$, metric
  $g_\infty$, and the
  convergence to $g_\infty$ is in the weak $W^{2,p}_{loc}$-topology.
  The inequality (\ref{RR}) will pass to such a limit. 
  Since $T_\infty M$ is a cone, if $\partial_r$
  denotes the radial vector field then from the cone structure,
  $\Ric_{g_\infty}(\partial_r, \partial_r) = 0$.
  Then by (\ref{RR}), we conclude that $R_{g_\infty} = 0$.
  This means that $\Ric_{g_\infty}$ vanishes, so $g_\infty$ is smooth and flat.
  Hence $T_\infty M$ is a cone over the round $S^2$ or
  $\R P^2$. Since $M$ is orientable, $T_\infty M$ must be a cone over the
  round $S^2$, and hence is isometric to
  $\R^3$. 
  By \cite[Theorem 0.3]{Colding (1997)},
    $(M, g)$ is isometric to $\R^3$, which contradicts our assumption that
  $\Ric_M > 0$. 
\end{proof}

\section{Proof of Theorem \ref{1.4}} \label{sect6}

To prove Theorem \ref{1.4} we will use a rescaling argument as in the
proof of Proposition \ref{5.3}.  The rescalings no longer have
uniform local double sided bounds on their
curvatures, so we need a different
convergence result.  This will come from \cite{Lebedeva-Petrunin}, which
provides a weak convergence of curvature operators.  It turns out
that this is enough to obtain a contradiction.

We recall some results from \cite{Lebedeva-Petrunin}.
Given an $n$-dimensional Riemannian manifold $(M, g)$,
let $\Riem$ be the curvature
operator of $M$ and let $\star_M : \Lambda^{n-2}(TM) \rightarrow
\Lambda^2(TM)$ be Hodge duality. 
Given $C^1$-functions
$\{f_j\}_{j=1}^{n-2}$ on $M$, put
\begin{equation} \label{6.1}
  \sigma = \star_M (\nabla f_1 \wedge \nabla f_2 \wedge \ldots \wedge \nabla f_{n-2})
\end{equation}
and define
\begin{equation} \label{6.2}
  r_M(f_1, \ldots, f_{n-2}) = \langle \sigma, \Riem(\sigma) \rangle \: \dvol_M,
\end{equation}
a measure on $M$.

Suppose that
$\{(M_i, g_i)\}_{i=1}^\infty$ is a sequence of 
$n$-dimensional pointed complete
Riemannian manifolds with sectional curvatures uniformly bounded below, that
converges to an $n$-dimensional pointed Alexandrov space
$X_\infty$ in the Gromov-Hausdorff topology.
There is a notion of a test function $f_\infty$ on $X_\infty$.
Given $C^1$-functions $\{f_i\}_{i=1}^\infty$ on the $M_i$'s, there is a notion of
the sequence $C^1_\delta$-converging to $f_\infty$.

The main result of \cite{Lebedeva-Petrunin} is the following.  Suppose
that for each $i$,  $\{f_{i,j}\}_{1 \le j \le n-2}$ is a collection of
$C^1$-functions on $M_i$. Suppose that for each $j$, there is a
$C^1_\delta$-limit
$\lim_{i \rightarrow \infty} f_{i,j} = f_{\infty, j}$, where
  $f_{\infty, j}$ is a test function on $X_\infty$. Then there is a weak limit
\begin{equation} \label{6.3}
  \lim_{i \rightarrow \infty} r_{M_i}( f_{i,1}, \ldots, f_{i,n-2}) =
  r_{X_\infty}( f_{\infty,1}, \ldots, f_{\infty,n-2}).
\end{equation}
Furthermore, the measure $r_{X_\infty}( f_{\infty,1}, \ldots, f_{\infty,n-2})$
is intrinsic to $X_\infty$.  It vanishes on the strata of
$X_\infty$ with codimension greater than two,
and has descriptions on the codimension-two stratum and the set of regular points. Similarly, there is a measure $R_{X_\infty}$ on $X_\infty$ to which the
scalar curvature measures converge, i.e.
$\lim_{i \rightarrow \infty} R_{M_i} \dvol_{M_i} = R_{X_\infty}$ in the weak
topology.

The preceding constructions can also be carried out locally.

\begin{lemma}
Suppose that $M$ is a $3$-manifold with $c$-pinched nonnegative Ricci curvature. 
Given $f \in C^1(M)$, put $V = \nabla f$. Then
      \begin{equation} \label{RR2}
      R g(V,V) \dvol_M \le \left( 1 + \frac{2}{c} \right) \left( \frac12 R \dvol_M
      g(V,V) -
      r_M(f) \right).
      \end{equation}
\end{lemma}
\begin{proof}
The proof is similar to that of Lemma \ref{fixed}.  Fixing $m \in M$ and
restricting to unit vectors $V_m \in T_m M$, using Lagrange multipliers one sees that it
is enough to check the inequality when $V_m$ is an eigenvector associated to the quadratic form
coming from the difference of the two sides of (\ref{RR2}). One finds that these
eigenvectors are the eigenvectors of the Ricci operator, in which case
(\ref{RR2}) reduces to (\ref{RR}).
\end{proof}

\begin{proposition} \label{6.4} If there is some $A < \infty$ so that
      the sectional curvatures of $(M, g)$ satisfy
      $K(m) \ge - \: \frac{A}{d(m,m_0)^2}$, where $m_0$ is
      some basepoint, then Conjecture \ref{1.1} holds.
\end{proposition}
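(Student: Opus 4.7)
The plan is to argue by contradiction, assuming $M$ is noncompact and $\Ric(g_0)>0$ (justified by the equivalence of Conjectures \ref{1.1} and \ref{1.2} and the reductions at the start of Section \ref{sect2}). Proposition \ref{1.5} and Corollary \ref{1.7} then supply cubic volume growth for $(M,g_0)$. The key scale-invariance: under $g_0\mapsto \alpha^{-2}g_0$, both $K$ and $d(m,m_0)^{-2}$ scale by $\alpha^2$, so the hypothesis $K(m)\ge -A/d(m,m_0)^2$ is preserved, and any sequence of rescalings $(M,\alpha_i^{-2}g_0,m_0)$ with $\alpha_i\to\infty$ has uniformly lower-bounded sectional curvature on every compact set disjoint from $m_0$ (measured in the rescaled distance).

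First I would extract a tangent cone $T_\infty M$. After passing to a subsequence, $(M,\alpha_i^{-2}g_0,m_0)$ converges in the pointed Gromov-Hausdorff topology to a pointed metric space $(T_\infty M, v)$, which is a metric cone by the standard cone structure of asymptotic limits of $\Ric\ge 0$ manifolds. The positive asymptotic volume ratio coming from cubic volume growth forces $T_\infty M$ to have Hausdorff dimension three, so $T_\infty M$ is a Euclidean cone over a two dimensional Alexandrov space $\Sigma$ of curvature $\ge 1$. The scale-invariant lower sectional curvature bound passes to the limit on compact sets away from $v$; for any compact $K\subset T_\infty M\setminus\{v\}$ the rescalings provide smooth approximants with uniform lower sectional curvature bound, placing us in the setting of \cite{Lebedeva-Petrunin}.

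The heart of the argument is to promote the $c$-Ricci pinching on $M_i$, which reads $\Ric_i\ge\rho R_i g_i$ with $\rho=c/(2+c)$, to an assertion on $T_\infty M$. Given $C^1$-approximations $f_i$ of smooth test functions on $T_\infty M$, \cite{Lebedeva-Petrunin} supplies weak convergence of the scalar curvature measures $R_{M_i}\,\dvol_{M_i}\rightharpoonup R_{T_\infty M}$ and of the curvature operator measures $r_{M_i}(f_i)\rightharpoonup r_{T_\infty M}(f_\infty)$. In dimension three, by choosing $f_i$ that approximate functions with gradient tangent to the link of the cone, I would extract the intrinsic Ricci measure in the radial direction; this vanishes on the cone because planes containing $\partial_r$ are flat in a Riemannian cone. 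Passing the pinching inequality $R_i\le\rho^{-1}\Ric_i(v_i,v_i)$ to the weak limit and combining with nonnegativity of $R_{T_\infty M}$ forces $R_{T_\infty M}\equiv 0$ on the regular part of $T_\infty M\setminus\{v\}$. Consequently $\Sigma$ has Gauss curvature identically one as an Alexandrov space, and (using that $M\cong\R^3$ by Schoen-Yau is orientable, ruling out $\R P^2$) $\Sigma$ is the round $S^2$, so $T_\infty M$ is isometric to flat $\R^3$.

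The proof then concludes by \cite[Theorem 0.3]{Colding (1997)}, which forces $(M,g_0)$ itself to be isometric to flat $\R^3$, contradicting $\Ric(g_0)>0$. The main obstacle is the identification $R_{T_\infty M}\equiv 0$ on the regular part: $T_\infty M$ does not a priori carry a smooth Riemannian structure, so one must work intrinsically with the measures $r_{T_\infty M}$ and $R_{T_\infty M}$ and their densities on the regular stratum of the cone, using the Hodge-dual construction of $r_M$ in \cite{Lebedeva-Petrunin} to isolate the radial Ricci direction and thereby transfer the pointwise pinching inequality across the weak limit.
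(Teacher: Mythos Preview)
Your overall strategy coincides with the paper's: rescale to a tangent cone, invoke \cite{Lebedeva-Petrunin} to pass curvature measures through the limit, use the $c$-pinching together with the vanishing of radial Ricci on a cone to force $R_{X_\infty}=0$, conclude the cone is $\R^3$, and finish with Colding. However, three points in your outline are either incorrect or genuinely incomplete.

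First, the link $\Sigma$ is \emph{not} known to have Alexandrov curvature $\ge 1$; that conclusion would require nonnegative \emph{sectional} curvature on $M$, whereas here you only have $\Ric\ge 0$ together with $K\ge -A/d^2$. The paper only obtains that $\Sigma$ is a connected Alexandrov surface with curvature bounded below by $-A$, so its underlying $2$-manifold $Y$ could a priori have any topology. Second, and more seriously, your choice of test function is backwards. In the construction of \cite{Lebedeva-Petrunin} one has $r_M(f)=\langle \star_M\nabla f,\Riem(\star_M\nabla f)\rangle\,\dvol$, so $r_M(f)$ records the sectional curvature of the plane \emph{orthogonal} to $\nabla f$. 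To isolate $\Ric(\partial_r,\partial_r)$ one must take $f_i$ with gradient approximating the \emph{radial} direction (smoothings of the distance to $m_0$), not tangent to the link. With $f$ radial on the cone the paper computes explicitly (Lemma~\ref{6.5}) that $R_{X_\infty}=2\,r_{X_\infty}(f)$, whence on approximants $\tfrac{R}{2}-\Ric(V_i,V_i)\to 0$ weakly; combined with the pointwise pinching inequality $\tfrac{R}{2}-\Ric(V_i,V_i)\ge \tfrac{c}{3}R$ (Lemmas~\ref{6.12}--\ref{6.15}) this gives $R_{X_\infty}=0$ as a measure, not merely on the regular set. Third, the step from $R_{X_\infty}=0$ to ``$\Sigma$ is a round sphere'' is not automatic: the paper uses the explicit formula $R_{X_\infty}=2\,dr\wedge(d\omega_Y-\dvol_Y)$ to get $d\omega_Y=\dvol_Y$ as measures on $Y$, integrates (Gauss--Bonnet) to obtain $\chi(Y)>0$, uses orientability to rule out $\R P^2$, and then a harmonic-conformal-factor argument to conclude the metric is round.
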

\begin{proof}
  We will assume that $\Ric_M > 0$, with $M$ noncompact,
  and derive a contradiction. From Proposition \ref{4.1}, there is a sequence
  $\{\alpha_i\}_{i=1}^\infty$ tending to infinity so that putting
  $g_i = \alpha_i^{-2} g$ and 
  $M_i = \left( M, g_i \right)$, the sequence 
  $\{ \left( M_i, m_0 \right)
  \}_{i=1}^\infty$
  converges to a three-dimensional metric cone
  $(X_\infty, x_\infty)$ in the pointed Gromov-Hausdorff topology.
  From the curvature assumption, 
  the cone $X_\infty$ has curvature bounded
  below by the function $- \: \frac{A}{d(x, x_\infty)^2}$
  in the Alexandrov sense.
  As a locally Alexandrov space, the cone will have no boundary points, i.e.
  no codimension-one stratum.
  Let $\Sigma_\infty$ denote the link of the cone, so that
  $X_\infty = \cone(\Sigma_\infty)$. 
  Then $\Sigma_\infty$ is a connected
  Alexandrov surface with curvature bounded below by $-A$. The underlying topological space of $\Sigma_\infty$ is a $2$-manifold $Y$ without boundary, which hence admits a smooth structure.
  Let $\omega_Y$ denote the curvature measure on $Y$, in the sense of
  \cite{Reshetnyak (1993)}.  (If $Y$ is a smooth Riemannian $2$-manifold then
  $\omega_Y = K \dvol_Y$, where $K$ is the Gaussian curvature.)
  
\begin{lemma} \label{6.5}
  Let $\partial_r$ denote the radial vector field on $X_\infty$.
  Then
\begin{equation} \label{6.6}
  r_{X_\infty}(f) = (\partial_r f)^2 dr \wedge (d\omega_{Y} -
  \dvol_{Y}),
  \end{equation}
where $d\omega_{Y}$ is the curvature measure of the Alexandrov surface $Y$
and $\dvol_Y$ is the two-dimensional Hausdorff measure of $Y$. Also,
\begin{equation} \label{6.7}
R_{X_\infty} = 2 dr \wedge (d\omega_{Y} -
\dvol_{Y}).
\end{equation}
\end{lemma}
\begin{proof}
From
\cite[Section 1 and Appendix A]{Richard (2018)}, there is a
$1$-parameter family of smooth Riemannian metrics $\{h_s\}_{s \in (0, \epsilon)}$ on $Y$ so that $\lim_{s \rightarrow 0} (Y, h_s) = \Sigma_\infty$ in the Gromov-Hausdorff topology, and the curvature of $(Y, h_s)$ is bounded below by $-A$.
For $s \in (0, \epsilon)$,
let $Y_s$ denote $Y$ with the Riemannian metric $h_s$.
We first compute $r_{\cone(Y_s)}$.
Writing $\cone(Y_s) - \star = (0, \infty) \times Y_s$, 
if $V$ is a vector field on $Y_s$ then we can also consider it to be a
vector field on $\cone(Y_s) - \star$. 
We have $\Riem(\partial_r \wedge V) = 0$.
If $V$ and $W$ are vector fields on $Y_s$ then
$\Riem(V \wedge W) = \frac{K_s-1}{r^2} V \wedge W$, where
$K_s$ is the Gaussian curvature of $Y_s$.
Hence if $f$ is the radial function on $\cone(Y_s)$ then
\begin{align} \label{6.8}
  r_{\cone(Y_s)} (f) = &
  \langle \star_{\cone(Y_s)} \partial_r, \Riem(\star_{\cone(Y_s)} \partial_r)
  \rangle \:
  \dvol_{\cone(Y_s)} \\
  = &
  \frac{K_s-1}{r^2} \langle \star_{\cone(Y_s)} \partial_r, \star_{\cone(Y_s)}
  \partial_r \rangle \: r^2 dr \wedge \dvol_{Y_s} \notag \\
  = & 
  (K_s-1) dr \wedge \dvol_{Y_s} = dr \wedge (K_s \dvol_{Y_s} - \dvol_{Y_s}). \notag
\end{align}
Then in general,
\begin{equation} \label{6.9}
  r_{\cone(Y_s)} (f) = (\partial_r f)^2 dr \wedge (K_s \dvol_{Y_s} - \dvol_{Y_s}).
\end{equation}

As $s \rightarrow 0$, we have pointed Gromov-Hausdorff convergence
$\lim_{s \rightarrow 0} \cone(Y_s) = X_\infty$. Working locally on
$X_\infty$, say on an annular region $a \le r \le A$, there is a weak
limit $\lim_{s \rightarrow \infty} r_{\cone(Y_s)} = r_{X_\infty}$, which
gives (\ref{6.6}).

As
\begin{equation} \label{6.10}
  R_{\cone(Y_s)} \dvol_{\cone(Y_s)} = \frac{2(K_s-1)}{r^2} r^2 dr \wedge \dvol_{Y_s} =  2(K_s-1) dr \wedge \dvol_{Y_s},
\end{equation}
equation (\ref{6.7}) follows.
\end{proof}

If $f$ is the radial function $r$ on
$X_\infty$ then from (\ref{6.6}) and (\ref{6.7}), we have
\begin{equation} \label{6.11}
  R_{X_\infty} = 2 r_{X_\infty}(f).
  \end{equation}
  Returning to the sequence $\{M_i\}_{i=1}^\infty$,
  the inequality (\ref{RR2}) will pass to the weak limit, so (\ref{6.11}) implies that
  $R_{X_\infty} = 0$. Equation (\ref{6.7}) gives
\begin{equation} \label{6.20}
  d\omega_Y = \dvol_Y.
\end{equation}

Integrating (\ref{6.20}) over $Y$ shows that the Euler characteristic of $Y$ is positive.  By Perelman stability, $\cone(X_\infty) - \star$ is orientable, so
$Y$ is a $2$-sphere.  As an Alexandrov surface, the Alexandrov geometry on $Y$
comes from a Riemannian metric of the form $e^{2\phi} g_{S^2}$ which is
subharmonic in the sense of \cite[Section 7]{Reshetnyak (1993)}. Equation (\ref{6.20}) becomes
\begin{equation} \label{6.21}
  \triangle_{S^2} \phi - 1 = - e^{2\phi},
  \end{equation}
where $\triangle_{S^2} \phi$ is {\it a priori} a measure on $S^2$ and 
$e^{2\phi}$ is an $L^1$-function.  From (\ref{6.20}) the curvature measure $d\omega_Y$
is absolutely continuous with respect to the Riemannian density on the round $S^2$, so
\cite[Proposition 2.8]{Ambrosio-Bertrand (2016)} implies that $e^{2\phi}$ is
$L^p$-regular on $S^2$ for all $p < \infty$.  Then (\ref{6.21}) implies that
$\phi$ is $W^{2,p}$-regular, hence $C^{1, \alpha}$-regular for all $\alpha \in (0,1)$.
We can now bootstrap (\ref{6.21}) to conclude that $\phi$ is smooth on $S^2$.  Then
(\ref{6.21}) becomes the statement that $e^{2\phi} g_{S^2}$ has constant curvature $1$.
Thus $Y$ is isometric to
the round $S^2$. Hence $X_\infty$ is the flat $\R^3$. By
\cite[Theorem 0.3]{Colding (1997)}, $(M, g)$ is flat, which is a
contradiction.
\end{proof}


\begin{thebibliography}{10}
  
  \bibitem{Ambrosio-Bertrand (2016)} L. Ambrosio and J. Bertrand,
  ``On the regularity of Alexandrov surfaces with curvature bounded below'',
  Anal. Geom. Metr. Spaces 2016, p. 282-287 (2016)
  
  \bibitem{Brendle-Huisken-Sinestrari (2011)}
    S. Brendle, G. Huisken and C. Sinestrari,
    ``Ancient solutions to the Ricci flow with pinched curvature'',
    Duke Math. J. 158, p. 537-551 (2011)
    
    \bibitem{Cheeger-Colding (1996)} J. Cheeger and T. Colding, ``Lower bounds on Ricci
    curvature and the almost rigidity of warped products'', Ann. Math. 144, p. 189-237 (1996)

  \bibitem{Cheeger-Fukaya-Gromov (1992)}
    J. Cheeger, K. Fukaya and M. Gromov,
    ``Nilpotent structures and invariant metrics on collapsed manifolds'',
    J. Amer. Math. Soc. 5, p. 327-372 (1992)
    (1992)

  \bibitem{Cheeger-Gromov (1985)} J. Cheeger and M. Gromov,
    ``On the characteristic numbers of complete manifolds of bounded curvature
    and finite volume'', in \underline{Differential Geometry and Complex
      Analysis}, Springer, Berlin,
    pp. 115-154 (1985)
    
  \bibitem{Cheeger-Gromov (1990)} J. Cheeger and M. Gromov,
    ``Collapsing Riemannian manifolds while keeping their curvature bounded II''
    J. Diff. Geom. 32, p. 269-298 (1990)
  
  \bibitem{Cheeger-Tian (2005)} J. Cheeger and G. Tian,
    ``Curvature and injectivity radius estimates for Einstein $4$-manifolds'',
    J. Amer. Math. Soc. 19, p. 487-525 (2005)

  \bibitem{Chen-Zhu (2000)} B.-L. Chen and X.-P. Zhu,
    ``Complete Riemannian manifolds with pointwise pinched curvature'',
    Inv. Math. 140, p. 423-452
    (2000)

  \bibitem{Chow-Lu-Ni (2006)} B. Chow, P. Lu and L. Ni,
\underline{Hamilton's Ricci flow}, Amer. Math. Soc., Providence (2006)
  
\bibitem{Colding (1997)} T. Colding,
``Ricci curvature and volume convergence'', Ann. Math 145, p. 477-501
  (1997)

\bibitem{Deruelle (2016)} A. Deruelle,
  ``Smoothing out positively curved metric cones by Ricci expanders'',
  Geom. and Funct. Anal. 26, p. 188-249 (2016)

\bibitem{Deruelle-Schulze-Simon (2022)} A. Deruelle, F. Schulze and M. Simon,
  ``Initial stability estimates for Ricci flow and three dimensional
  Ricci-pinched manifolds'', preprint,
  https://arxiv.org/abs/2203.15313 (2022)
  
\bibitem{Fukaya (1990)} K. Fukaya,
  ``Hausdorff convergence of Riemannian manifolds and its applications'',
  in \underline{Recent Topics in Differential and Analytic Geometry},
  ed. T. Ochiai, Math. Soc. of Japan, Tokyo, p. 143-238
  (1990)

\bibitem{Grove-Karcher (1973)} K. Grove and H. Karcher,
``How to conjugate C1-close group actions'',
Math. Z. 132, p. 11-20 (1973)
  
\bibitem{Hamilton (1982)} R. Hamilton,
``Three-manifolds with positive Ricci curvature'', J. Diff. Geom. 17, p. 255-306
  (1982)

\bibitem{Hamilton (1994)} R. Hamilton,
  ``Convex hypersurfaces with pinched second fundamental form'',
  Comm. Anal. Geom. 2, p. 167-172 (1994)
  
\bibitem{Hamilton (1995)} R. Hamilton,
  ``The formation of singularities in the Ricci flow'', in
  Surveys in Differential Geometry 2, International Press, Boston, p. 7-136
  (1995)
  
\bibitem{Hilaire (2014)} C. Hilaire,
  ``Ricci flow on Riemannian groupoids'', preprint,
  https://arxiv.org/abs/1411.6058
  (2014)

  \bibitem{Kasue (1989)} A. Kasue, ``A convergence theorem for Riemannian manifolds and some applications'',
Nagoya Math. J. 114, p. 21-51 (1989)

\bibitem{Kleiner-Lott (2008)} B. Kleiner and J. Lott,
``Notes on Perelman's papers'', Geom. Top. 12, p. 2587-2855 
  (2008)

\bibitem{Kleiner-Lott (2014)} B. Kleiner and J. Lott,
``Locally collapsed $3$-manifolds'', Ast\'erisque 365, p. 7-99
  (2014)
  
 \bibitem{Kleiner-Lott (2014b)} B. Kleiner and J. Lott,
 ``Geometrization of three-dimensional orbifolds via Ricci flow'',
 Asterisque 365, p. 101-177 (2014)

\bibitem{Lebedeva-Petrunin} N. Lebedeva and A. Petrunin,
  ``Curvature tensor of smoothable Alexandrov spaces'',
preprint, https://arxiv.org/abs/2202.13420 (2022)

\bibitem{Lee-Topping (2022)} M.-C. Lee and P. Topping,
  ``Three-manifolds with non-negatively pinched Ricci curvature'',
  preprint, https://arxiv.org/abs/2204.00504 (2022)

\bibitem{Lott (2007)} J. Lott,
      ``On the long-time behavior of type-III Ricci flow solutions'',
      Math. Annalen 339, p. 627-666 
      (2007)

    \bibitem{Lott-Zhang (2016)} J. Lott and Z. Zhang,
      ``Ricci flow on quasiprojective manifolds II'', J. Eur. Math. Soc. 18,
      p. 1813-1854 
      (2016)

    \bibitem{Lu (2010)} P. Lu,
      ``A local curvature bound in Ricci flow'', Geom. Top. 14, p. 1095-1110
      (2010)

    \bibitem{Ni (2005)} L. Ni,
``Ancient solutions to K\"ahler-Ricci flow'', Math. Res. Lett. 12, p. 633–654
      (2005)
      
      \bibitem{Ni-Wu (2007)} L. Ni and B. Wu,
      ``Complete manifolds with nonnegative curvature operator'',
Proc. Amer. Math. Soc. 135 (2007), p. 3021-3028 (2007) 

\bibitem{Petersen (1997)} P. Petersen,
``Convergence theorems in Riemannian geometry'', in \underline{Comparison geometry},
MSRI Publ. 30, Cambridge Univ. Press, Cambridge, p. 167-202 (1997)

    \bibitem{Reshetnyak (1993)} Y. Reshetnyak,
      ``Two-dimensional manifolds of bounded curvature'', in
      \underline{Geometry IV}, Springer-Verlag, New York
      (1993)
      
      \bibitem{Rong (1996)} X. Rong,
      ``On the fundamental groups of manifolds of positive sectional curvature'', Ann. of Math., p. 397-411 (1996)

    \bibitem{Richard (2018)} T. Richard,
      ``Canonical smoothing of compact Alexandrov surfaces'',
      Ann. Sci. ENS 51, p. 263-279
      (2018)
  
    \bibitem{Schoen-Yau (1982)} R. Schoen and S.-T. Yau,
      ``Complete three dimensional manifolds with positive Ricci curvature and
      scalar curvature'', in \underline{Seminar on Differential Geometry},
      ed. S.-T. Yau, Annals of Math. Studies 102, Princeton University Press,
      Princeton
      (1982) 

    \bibitem{Schulze-Simon (2013)} F. Schulze and M. Simon,
      ``Expanding solitons with non-negative curvature operator coming out of
      cones'', Math. Z. 275, p. 625–639 (2013)
      
    \bibitem{Scott (1983)} P. Scott, ``The geometries of $3$-manifolds'',
      Bull. London Math. Soc. 15, p. 401-487 (1983)
  
    \bibitem{Vogt (1989)} E. Vogt, ``A foliation of $\R^3$ and other
      punctured $3$-manifolds by circles'',
Publ. Math. IHES 69, p. 215-232
      (1989)
\end{thebibliography}
\end{document}